\documentclass{article}
\usepackage{CJKutf8}

\newtheorem{fed}{\textbf{Definition}}[section]
\newtheorem{thm}[fed]{\textbf{Theorem}}

\newtheorem{lemma}[fed]{\textbf{Lemma}}

\newtheorem{rem}[fed]{\textbf{Remark}}
\newtheorem{prop}[fed]{\textbf{Proposition}}
\newtheorem{cor}[fed]{\textbf{Corollary}}
\usepackage{amscd,amssymb,bbm,graphicx,epsfig,psfrag,epic,eepic,latexsym,amsmath}
\usepackage{amsmath}
\usepackage[arrow,matrix,curve]{xy}
\usepackage{mathrsfs}
\usepackage[normalem]{ulem}
\usepackage{soul}
\usepackage
{color}

\begin{document}

\title{The kei word metric on the transvection group and its unit ball}
\author{Urs Frauenfelder, Lei Zhao}
\maketitle

\abstract
We define a bi-invariant word metric on a group using keis. We discuss whether such a word metric is trivial or non-trivial in various examples coming from group theory, dynamical systems, as well as complex and symplectic geometry.

\section{Introduction}

\subsection{Historical remarks about ''kei``}
In its 1943 volume (the last one before it resumed in 1949), the Tohoku Mathematical Journal published  a paper by Mituhisa Takasaki entitled ''Abstractions of
symmetric functions``. Although it has this English title besides a Japanese one, the paper is written in an oldstyle Japanese which is challenge to read even for modern
Japanese readers. 

About the life of this mathematician, we basically do not know anything. From this article, it seems that he was working at Harbin which was then part of the Japanese puppet Manchukuo.  See for example \cite{otsuka} for the difficult historical circumstances which
probably overshadowed the life of Takasaki. In his paper he mentioned that he plans a sequel which however never seemed to appear and we are not aware of any other paper by Takasaki. 

In his paper, Takasaki discovered an algebraic structure one encounters when studying involutions. What motivated him
to refer to this structure as a ''kei`` (''\begin{CJK}{UTF8}{gbsn}圭\end{CJK}``) remains mysterious to us. 

The structure of a kei naturally appears in the study of
symmetric spaces, where the geodesic reflections at all the points of a symmetric space are all involutions, and give rise to the structure of a kei.  This kei structure plays a crucial role in Loos' \cite{loos} approach to symmetric spaces.

However, we point out that the notion of  kei
is more general than the notion of a symmetric space structure and allows infinite dimensional examples as well. For example{,}
the space of all antisymplectic involutions of a symplectic manifold is a kei, while the generalization of symmetric spaces to
infinite dimensions is still a matter of ongoing debate \cite{freyn-hartnick-horn-koehl}. Through the work of Loos{,} the notion
of {kei} was rediscovered under different names like ''symmetric set`` in the work of Nobusawa \cite{nobusawa} or
''symmetric groupoids`` in the work of Pierce \cite{pierce}. In his foundational paper on quandles, Joyce rediscovered an english
review of the paper of Takasaki \cite{joyce}, and since then a kei is also known under the name of an \emph{involutive quandle}.
For more information on the history of involutive quandles we refer to the article of Stanovsk\'y \cite{stanovsky}. 

\subsection{Bi-invariant metric given by a kei}
Following standard terminology of the theory of symmetric space, see for example \cite{loos}, we consider the transvection group
of a kei. We explain how the kei endowed its transvection group with a bi-invariant metric. Bi-invariant metrics on the
Hamiltonian diffeomorphism group of a symplectic manifold play an important role in Hamiltonian dynamics. A famous example
is the Hofer metric, see for example \cite{hofer, hofer-zehnder, mcduff-salamon, polterovich}. However, there are other
bi-invariant metrics of dynamical origin on it, see for example \cite{brandenbursky-kedra-shelukhin, burago-ivanov-polterovich}.

The kei metric and particularly its unit ball is related to the question of reversibility. Questions of reversibility arise in
many different areas of mathematics, like group theory, algebraic geometry, ergodic theory and the theory of dynamical systems.
We refer to the book by O'Farrell and Short \cite{ofarrell-short}, and the literature therein on this huge
subject.  

The same group can arise as the transvection group of different keis which can endow the transvection group with different
kinds of bi-invariant metrics. This parallels the phenomenon that in the theory of reversibility one can fine tune the question
of reversibility to the question for which kind of transformations a given map is reversible. For example in the case
of Hamiltonian diffeomorphisms one can ask if they are reversible with respect to symplectic or antisymplectic involutions. 

The difference between symplectic and antisymplectic involutions is reminiscent of old questions in geometry. Inspired by Hilbert's famous saying ''one must be able to say at all times--instead of points, straight lines, and planes--tables, chairs, and beer mugs`` \cite{blumenthal}, it was Hjelmslev \cite{hjelmslev} who started to identify points with the reflection
at the point and line with reflection at the line. This allows to translate many geometric properties to algebraic properties.
For example, with this viewpoint, a point is coincident with a line if they commute, or two lines are perpendicular to each other if they 
commute. Detailed explanations of this intriguing dictionary between algebra and geometry can be found in Bachmann's book
''Aufbau der Geometrie aus dem Spiegelungsbegriff`` \cite{bachmann1}, see also \cite{bachmann}. 

It seems that the work of
Bachmann and the work of Takasaki are closely connected. The only reference Takasaki mentions in his paper is
Thomsen's ''Grundlagen der Elementargeometrie``, \cite{thomsen}. On the other hand this work of Thomsen plays as well
an important role in Bachmann's book \cite{bachmann1}. In particular, the ''Thomsen relation`` introduced in
\cite{bachmann1} is closely connected to reversibility. 

In the book of Bachmann \cite{bachmann1} a general reduction theorem is proved which tells us that in many plane geometries
isometries can be written as a product of two lines, i.e., two reflections at lines. In Euclidean geometry, this was already proved in the 19th century by Wiener \cite{wiener}. Why it holds in spherical and hyperbolic geometry is for example nicely explained in the book of Needham \cite[Chapter 6]{needham}. What about points, respectively reflections at points? 
In Euclidean geometry, the composition of two point reflections leads to a translation and the group of translations
is a proper normal subgroup of the group of orientation preserving isometries of the Euclidean plane. The situation in
hyperbolic geometry contrasts sharply with the euclidean one. Different from the orientation preserving euclidean isometries, the
orientation preserving hyperbolic isometries build a simple group. For that reason,  any orientation preserving hyperbolic 
isometry can be obtained as an even product of point reflections. In the language we introduce{,} the minimal number
of pairs of point reflections needed to obtain a given orientation preserving hyperbolic isometry is its norm with respect
to the kei metric associated to point reflections. However, different from the case of lines two points reflections are in general not sufficient to get an orientation preserving hyperbolic isometry. How many point reflections you need, is studied
by Kn\"uppel \cite{knueppel} and Stroscher \cite{stroscher} in the context of Bachmann and Hjelmslev groups.  

With this viewpoint, we may regard symplectic involutions as points and anti-symplectic involutions as lines. In this setting the corresponding geometric problems are translated into the trivialness of the kei-metrics.

We mention that bi-invariant metrics using the idea of racks and quandles have also been explored in \cite{Kedra}.
\\ \\
\emph{Acknowledgements: } This research has been carried out during 2016-2017. We thank Peter Quast, Makiko Tanaka for very useful discussions on symmetric spaces. The first named author gave a talk during the conference ``Topological data analysis meets symplectic topology'' in Tel Aviv in 2018.  We are happy that this talk raised interest from Egor Shelukhin and Leonid Polterovich, with whom we had intensive discussions over the years, which results in the joint work \cite{FPSZ}.

\section{The word metric}\label{Sec: The word metric}

We recall in this section how to associate to a symmetric, conjugation invariant generating set of a group a bi-invariant metric, following the article of Gal and Kedra \cite{gal-kedra}.
Suppose that $G$ is a group and $S \subset G$ is a generating set which is symmetric in the sense that $S=S^{-1}$, i.e., $g \in S$ if and only if $g^{-1} \in S$. Then for an element $g \in G$ the \emph{word norm} with respect to $S$ is defined as
$$|g|_S:=\min\{k \in \mathbb{N}_0: g=s_{i_1}\cdots s_{i_k}, s_i \in S\}.$$
The word norm has the following properties.
\begin{description}
 \item[(i)] $|g|_S \geq 0$ and $|g|_S=0$ if and only if $g=\mathrm{id}.$
 \item[(ii)] $|gh|_S \leq |g|_S+|h|_S$, for $g,h \in G$.
 \item[(iii)] $|g^{-1}|_S=|g|_S.$
\end{description}
The last property follows from the assumption that the generating set is symmetric. One then defines a metric on $G$ by setting for $g,h \in G$
$$d_S(g,h):=|g h^{-1}|_S.$$
This metric is right-invariant. In addition, if the generating set $S$ is invariant under conjugation, then the metric is additionally left invariant and therefore bi-invariant. 

We abbreviate by
$$B_S:=\{g \in G:|g|_S \leq 1\}=\{g \in G: d_S(g,\mathrm{id}) \leq 1\}$$
the unit ball in $G$ centered at the identity. 

Note that the following three properties are equivalent.
\begin{description}
 \item[(i)] The metric $d_S$ is \emph{trivial} in the sense that
$$d_S(g,h)=\left\{\begin{array}{cc}
1 & g \neq h\\
0 & g=h
\end{array}\right.$$
 \item[(ii)] $B_S=G$.
 \item[(iii)] $S=G \setminus \{\mathrm{id}\}$.
\end{description}

\section{The kei word metric}

Let $X$ be a set and denote by $\mathrm{Inv}(X)$ the set of all involutions of $X$, i.e., the set of all
maps $R \colon X \to X$, satisfying $R^2=\mathrm{id}|_X$. Conjugation defines a product on
$\mathrm{Inv}(X)$
$$R \star S= R S R.$$
This product has the following properties
\begin{description}
 \item[(i)] $R \star R=R.$
 \item[(ii)] $R \star (R \star S)=S.$
 \item[(iii)] $R \star (S \star T)=(R \star S) \star (R \star T).$
\end{description}
A tuple $(\mathcal{K}, \star)$ consisting of a set $\mathcal{K}$ endowed with a multiplication
$\star \colon \mathcal{K} \times \mathcal{K} \to \mathcal{K}$ satisfying properties (i)--(iii), 
is precisely an \emph{involutive quandle \cite{carter}}, or a \emph{Kei} as defined initially by Takasaki \cite{takasaki}. Given a kei $(\mathcal{K},\star)$ we
get by property (ii) a map
$$\mathcal{L} \colon \mathcal{K} \to \mathrm{Inv}(\mathcal{K}), \quad R \mapsto \mathcal{L}_R$$
where $\mathcal{L}_R$ is the left multiplication of $R$ on $\mathcal{K}$
$$\mathcal{L}_R \colon \mathcal{K} \to \mathcal{K}, \quad S \mapsto R \star S.$$
If $R, S, T \in \mathcal{K}$ are three elements of our kei, then we compute using properties
(ii) and (iii)
\begin{eqnarray*}
\mathcal{L}_{R} \mathcal{L}_{S} \mathcal{L}_{R}(T)&=&
R \star (S \star (R \star T))\\
&=&(R \star S) \star (R \star (R \star T))\\
&=&(R \star S) \star T\\
&=&\mathcal{L}_{R \star S}(T).
\end{eqnarray*}
Hence if we define a \emph{subkei} of a kei as a subset closed under multiplication{,} then the computation above shows that the image of the map $\mathcal{L}$ is a subkei of $\mathrm{Inv}(\mathcal{K})$.

For a set $X$ abbreviate by
$$\mathfrak{S}(X):=
\{\phi \colon X \to X, \phi\,\,\mathrm{bijective}\}$$
the group of permutations of $X$. 
For a kei $\mathcal{K}$ the \emph{transvection group}
$$G(\mathcal{K}) \subset \mathfrak{S}(\mathcal{K})$$
is defined as
$$G(\mathcal{K})=\langle \mathcal{L}_R \mathcal{L}_S: R,S \in 
\mathcal{K} \rangle,$$
i.e. the subgroup of perturbations generated by products of left multiplication maps of elements of the kei. 
Because $\mathcal{L}_S$ and $\mathcal{L}_R$ are involutions it holds that
$$(\mathcal{L}_S \mathcal{L}_R)^{-1}=\mathcal{L}_R \mathcal{L}_S$$
and therefore the generating set of the transvection group consisting of all products $\mathcal{L}_R \mathcal{L}_S$
is symmetric. Moreover, this set is also conjugation invariant since
$$\mathcal{L}_{K} \mathcal{L}_R \mathcal{L}_S \mathcal{L}_K=\mathcal{L}_{K \star R} \mathcal{L}_{K \star S}.$$
 Therefore it defines a bi-invariant metric
$$d_\mathcal{K} \colon G(\mathcal{K}) \times G(\mathcal{K}) \to \mathbb{N}_0$$
on the transvection group which we refer to as the \emph{kei word metric}.

\begin{rem} The generating set $\mathcal{S}=\{\mathcal{L}_{R} \mathcal{L}_{S}: R, S \in \mathcal{K} \}$ is a kei as well, with the conjugation as kei operation.
Indeed, we have
\begin{eqnarray*}
\mathcal{L}_{R} \mathcal{L}_{S} \mathcal{L}_{R'} \mathcal{L}_{S'} \mathcal{L}_{S} \mathcal{L}_{R}&=&
\mathcal{L}_{R} \mathcal{L}_{S} \mathcal{L}_{R} \mathcal{L}_{R} \mathcal{L}_{R'} \mathcal{L}_{R} \mathcal{L}_{R} \mathcal{L}_{S'} \mathcal{L}_{R} \mathcal{L}_{R} \mathcal{L}_{S} \mathcal{L}_{R}\\
&=& \mathcal{L}_{R \star S} \mathcal{L}_{R \star R'} \mathcal{L}_{R \star S'} \mathcal{L}_{R \star S}\\
&=&\mathcal{L}_{R \star (S \star R')} \mathcal{L}_{R \star (S' \star S)}.
\end{eqnarray*}
So that the set $\mathcal{S}$ is closed under the operation by conjugation. That this operation is a kei operation follows from the previous discussions.

\end{rem}

\section{A special class of keis}\label{special}

Suppose that for a set $X$ 
$$G < \mathfrak{S}(X)$$
is a subgroup of the permutation group of $X$
and 
$$R \in \mathrm{Inv}(X)$$
is an involution satisfying
$$RGR=G,$$
i.e., $R$ is  not necessarily an element of $G$ but every element of
$G$ when conjugated with $R$ in $\mathfrak{S}(X)$ is again in $G$.
For $g \in G$ abbreviate
$$R_g:=g R g^{-1}.$$ 
We introduce the set
$$\mathcal{K}_{R,G}:=\{R_g: g \in G\}.$$
\begin{lemma}
$\mathcal{K}_{R,G}$ is a subkei of $\mathrm{Inv}(X)$. 
\end{lemma}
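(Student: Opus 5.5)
We need to check two things: that every element of $\mathcal{K}_{R,G}$ is an involution of $X$, so that $\mathcal{K}_{R,G}\subset \mathrm{Inv}(X)$; and that $\mathcal{K}_{R,G}$ is closed under the product $R\star S=RSR$. The kei axioms (i)--(iii) then hold automatically, since they hold in $\mathrm{Inv}(X)$ and a subkei is by definition just a subset closed under multiplication.

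The first point is immediate. For $g\in G$ we have
$$R_g^2=gRg^{-1}gRg^{-1}=gR^2g^{-1}=\mathrm{id}|_X.$$

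For closure, take $g,h\in G$ and compute
$$R_g\star R_h=gRg^{-1}\,hRh^{-1}\,gRg^{-1}.$$
The aim is to write this as $kRk^{-1}$ for some $k\in G$. The key step, and the only place the hypothesis $RGR=G$ enters, is to move the middle element past $R$. Set $u=g^{-1}h\in G$. Then $RuR\in G$, and because $R$ is an involution,
$$RuRu^{-1}R=(RuR)\,R\,(RuR)^{-1}.$$
Here we use $(RuR)^{-1}=Ru^{-1}R$ and $R^2=\mathrm{id}$.

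Hence
$$R_g\star R_h=g\,(RuRu^{-1}R)\,g^{-1}=\bigl(gRuR\bigr)\,R\,\bigl(gRuR\bigr)^{-1}=R_k,\qquad k:=gRg^{-1}hR.$$
Since $k=g\,(RuR)$ is a product of two elements of $G$, it lies in $G$, so $R_g\star R_h\in\mathcal{K}_{R,G}$.

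The only real obstacle is noticing that the product $gRg^{-1}hR$ lies in $G$ even though $R$ itself need not belong to $G$. This is exactly what the condition $RGR=G$ guarantees, through $R(g^{-1}h)R\in G$. Everything else is routine manipulation using $R^2=\mathrm{id}$.
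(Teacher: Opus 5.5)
Your proof is correct and is essentially the paper's argument: the paper also writes $R_g R_h R_g=(gRg^{-1}hR)\,R\,(gRg^{-1}hR)^{-1}$ and uses $RGR=G$ to conclude $gRg^{-1}hR\in G$. Your substitution $u=g^{-1}h$ only makes explicit why this element lies in $G$, namely $gRg^{-1}hR=g\,(RuR)$.
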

\textbf{Proof:} Because an involution after conjugation remains an involution, it is clear that $\mathcal{K}_{R,G}$ is a subset of
$\mathrm{Inv}(X)$. It remains to show that it is invariant under conjugation. Hence we compute that for any $g, h \in G$, it holds 
\begin{eqnarray*}
R_g R_h R_g&=& (g R g^{-1})(h R h^{-1})(g R g^{-1})\\
&=&(g R g^{-1} h R)R(R h^{-1} g R g^{-1})\\
&=&(g R g^{-1} h R)R(g R g^{-1} h R)^{-1}.
\end{eqnarray*}
Because $RGR=G$ it holds that 
$$g R g^{-1} h R \in G$$
so that 
$$R_g R_h R_g \in 
\mathcal{K}_{R,G}.$$ 
This finishes the proof of the lemma. \hfill $\square$
\\ \\
There is a canonical homomorphism
$$\Psi \colon G \to \mathfrak{S}(\mathcal{K}_{G,R})$$
given for $g,h \in G$ by
$$\Psi(g)(R_h)=R_{gh}.$$
We further introduce the subgroup
$$G_R < \mathfrak{S}(X)$$
which is generated by all products of elements in the kei
$\mathcal{K}_{R,G}$, namely
$$G_R=\langle R_g R_h: g,h \in G\rangle.$$
\begin{lemma}\label{gr}
The group $G_R$ is a normal subgroup of $G$ and the homomorphism $\Psi$ maps $G_R$ to the 
transvection group $G(\mathcal{K}_{G,R})$.
\end{lemma}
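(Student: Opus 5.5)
The first step is to show that $G_R$ is contained in $G$. Every generator $R_gR_h$ can be rewritten as
$$R_g R_h = g R g^{-1} h R h^{-1} = g\,(R g^{-1} h R)\,h^{-1}.$$
The hypothesis $RGR=G$ gives $Rg^{-1}hR\in G$. Hence $R_gR_h$ is a product of three elements of $G$ and lies in $G$. Since $G_R$ is generated by these elements, it is a subgroup of $G$.

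The second step is normality, which I would check on generators. For $k\in G$,
$$kR_gR_hk^{-1}=(kgRg^{-1}k^{-1})(khRh^{-1}k^{-1})=R_{kg}R_{kh},$$
and this is again a generator of $G_R$. So conjugation by $k$ maps the generating set of $G_R$ into itself. It is a group homomorphism, so it maps $G_R$ into $G_R$, and $G_R$ is normal in $G$.

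The third step concerns $\Psi$. I first note that $\Psi$ is well defined: if $R_h=R_{h'}$, then $R_{gh}=gR_hg^{-1}=gR_{h'}g^{-1}=R_{gh'}$. In fact $\Psi(g)$ is simply conjugation by $g$ restricted to $\mathcal{K}_{R,G}$, which also shows that $\Psi$ is a homomorphism. The key identity is that for $h\in G$ the element $R_h$ is itself an element of $G_R$, up to composition with $R$, and acts on the kei by
$$R_h R_k R_h = \mathcal{L}_{R_h}(R_k),$$
because the kei product in $\mathrm{Inv}(X)$ is conjugation.

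Consequently, for a generator $R_gR_h$ of $G_R$ and any $R_k$,
$$\Psi(R_gR_h)(R_k)=(R_gR_h)R_k(R_gR_h)^{-1}=R_g\bigl(R_hR_kR_h\bigr)R_g=\mathcal{L}_{R_g}\mathcal{L}_{R_h}(R_k).$$
This shows $\Psi(R_gR_h)=\mathcal{L}_{R_g}\mathcal{L}_{R_h}$, which is a generator of $G(\mathcal{K}_{R,G})$.

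The one subtle point is the meaning of $\Psi(R_gR_h)$. The map $\Psi$ was defined via the $G$-index, so I must check that the formula $\Psi(g')(R_k)=g'R_kg'^{-1}$ applies to $g'=R_gR_h\in G$. It does: $\Psi(g')(R_k)=R_{g'k}=g'kRk^{-1}g'^{-1}=g'R_kg'^{-1}$. This is why the well-definedness check above matters. Since $\Psi$ is a homomorphism sending each generator of $G_R$ into $G(\mathcal{K}_{R,G})$, it maps all of $G_R$ into the transvection group.
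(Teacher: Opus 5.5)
Your proof is correct and follows the paper's argument: you show each generator $R_gR_h$ lies in $G$ using $RGR=G$, you get normality from $kR_gR_hk^{-1}=R_{kg}R_{kh}$, and you verify $\Psi(R_gR_h)=\mathcal{L}_{R_g}\mathcal{L}_{R_h}$ on generators. Your checks that $\Psi$ is well defined and is a homomorphism make explicit what the paper takes for granted, and the aside that $R_h$ lies in $G_R$ ``up to composition with $R$'' is not needed.
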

\textbf{Proof:} Generators of the group $G_R$ are of the form
$$R_g R_h=g R g^{-1} h R h^{-1}$$
with $g,h \in G$ which lie in $G$ because $RGR=G$. Therefore $G_R$ is a subgroup of $G$. That it is normal follows from the identity
$$kR_g R_h k^{-1}=R_{kg}R_{kh}$$
for three group elements $g,h,k \in G$. 
To understand the image of $\Psi$ restricted to $G_R$ we compute for $g,h,k \in G$ 
$$\Psi(R_g R_h)(R_k)=R_{R_g R_h k}=\mathcal{L}_{R_g}\mathcal{L}_{R_h}(R_k)$$
which means that $\Psi$ maps generators of $G_R$ to generators
of $G(\mathcal{K}_{G,R})$. That proves the second assertion of the lemma. \hfill $\square$

\begin{prop}\label{tra}
Assume that the group $G$ is simple and $R$ is not commuting with every element of $G$. Then it holds that $G=G_R$ and $\Psi$ maps
$G_R$ isomorphically to the transvection group of $\mathcal{K}_{G,R}$. In particular, 
$G$ is canonically isomorphic to $G(\mathcal{K}_{G,R})$. 
\end{prop}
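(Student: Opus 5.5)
By Lemma~\ref{gr}, $G_R$ is a normal subgroup of $G$. Since $G$ is simple, it suffices to show that $G_R\neq\{\mathrm{id}\}$; then $G_R=G$. By hypothesis there is a $g\in G$ with $Rg\neq gR$. Then
$$R_gR=gRg^{-1}R\neq \mathrm{id},$$
because $gRg^{-1}R=\mathrm{id}$ would give $gR=Rg$. The element $R_gR=R_gR_{\mathrm{id}}$ is one of the generators of $G_R$, so $G_R$ is nontrivial and hence $G_R=G$.

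Next I show that $\Psi$ is injective on $G=G_R$. The kernel of $\Psi$ is a normal subgroup of $G$. So by simplicity either $\ker\Psi=G$ or $\ker\Psi=\{\mathrm{id}\}$, and it is enough to rule out $\ker\Psi=G$. If $\Psi$ were trivial, then $R_{gh}=R_h$ for all $g,h\in G$. Taking $h=\mathrm{id}$ gives $gRg^{-1}=R$ for every $g\in G$, which contradicts the assumption that $R$ does not commute with every element of $G$. Hence $\Psi$ is injective.

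For surjectivity onto the transvection group, recall that $G(\mathcal{K}_{G,R})$ is generated by the maps $\mathcal{L}_{R_g}\mathcal{L}_{R_h}$. The computation in the proof of Lemma~\ref{gr} shows that
$$\mathcal{L}_{R_g}\mathcal{L}_{R_h}=\Psi(R_gR_h).$$
Every generator of $G(\mathcal{K}_{G,R})$ therefore lies in $\Psi(G_R)$, so $\Psi(G_R)=G(\mathcal{K}_{G,R})$. Combining this with injectivity and $G=G_R$ gives the canonical isomorphism $G\cong G(\mathcal{K}_{G,R})$.

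The argument has no real obstacle. The only point to check is that $\Psi$ is a well-defined homomorphism: if $R_h=R_{h'}$, then $h^{-1}h'$ commutes with $R$, and so $R_{gh}=R_{gh'}$. This is implicit in the paper's statement. Note also that simplicity of $G$ is used twice: once to conclude $G_R=G$, and once to show that $\ker\Psi$ is trivial.
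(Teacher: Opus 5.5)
Your proof is correct and follows essentially the same route as the paper. You use nontriviality of $G_R$ via $R_gR\neq\mathrm{id}$ together with simplicity, surjectivity from $\Psi(R_gR_h)=\mathcal{L}_{R_g}\mathcal{L}_{R_h}$, and injectivity because $\Psi(g)(R)=R_g\neq R$ rules out $\ker\Psi=G$; your explicit check that $\Psi$ is well defined, which the paper leaves implicit, is a welcome addition.
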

\textbf{Proof:} By Lemma~\ref{gr}, the group $G_R$ is a normal subgroup
of $G$, thus either $G_R=G$ or $G_R$ is trivial by the simplicity of $G$. To show that it is not trivial we use the assumption that there exists $g \in G$ which does not commute with $R$. This can be rephrased as for such a choice of $g \in G$,
$$R_g \neq R$$ or equivalently
$$R_g R \neq \mathrm{id}.$$
However, $R_g R$ belongs to $G_R$. Therefore $G_R$ is not trivial and we have
$$G_R=G.$$
Invoking once more Lemma~\ref{gr} it holds that
$$\Psi|_{G_R}: G_R \to G(\mathcal{K}_{G,R})$$
is a surjective homomorphism. It remains to show that it is injective.
The kernel of a homomorphism is a normal subgroup and therefore because
$G_R=G$ is simple the homomorphism is either injective or trivial. Hence we are left with showing that it is not trivial. We use again an element $g \in G$ which does not commute with $R$ to compute
$$\Psi(g)(R)=R_g \neq R$$
which implies that
$$\Psi(g) \neq \mathrm{id}_{\mathfrak{S}(\mathcal{K}_{G,R})}.$$
This proves that the restriction of $\Psi$ to $G=G_R$ is not trivial and hence the proposition follows. \hfill $\square$
\\ \\
Now assume that $G<\mathfrak{S}(X)$ is a simple group. If $R \in \mathrm{Inv}(X)$ is an involution such that
$RGR=G$ but $R$ does not commute with every element of $G$, we can by Proposition~\ref{tra} identify $G$ with the transvection group of the kei $\mathcal{K}_{G,R}$. In particular, $R$ endows $G$ with a bi-invariant metric, namely the kei word metric 
$$d_R:=d_{\mathcal{K}_{R,G}}\colon G \times G \to \mathbb{N}_0.$$
We abbreviate by
$$B_R:=B_{\mathcal{K}_{G,R}} \subset G$$
its unit ball.
\begin{prop}\label{nontri}
Under the assumptions of Proposition~\ref{tra} if $g \in B_R$, then $g$ is 
conjugated to its inverse $g^{-1}$ by an element $R_h$ for $h \in G$.
\end{prop}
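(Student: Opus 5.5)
Under the isomorphism $\Psi$ of Proposition~\ref{tra}, the generating set of $G(\mathcal{K}_{G,R})$ corresponds to the set $\{R_gR_h : g,h\in G\}\subset G_R=G$. Indeed, in the proof of Lemma~\ref{gr} we computed $\Psi(R_gR_h)=\mathcal{L}_{R_g}\mathcal{L}_{R_h}$. So, reading the kei word metric on $G$ through $\Psi$, I plan to first unwind what membership in $B_R$ means. The statement $g\in B_R$ says that $|\Psi(g)|\le 1$. This means either $\Psi(g)=\mathrm{id}$, or $\Psi(g)=\mathcal{L}_{R_a}\mathcal{L}_{R_b}$ for some $a,b\in G$. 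By injectivity of $\Psi$ we get in the first case $g=\mathrm{id}$, and in the second case $g=R_aR_b$ with $R_a,R_b$ in the kei $\mathcal{K}_{R,G}$.

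Next I treat the two cases.

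If $g=\mathrm{id}$, then $g=g^{-1}$ is conjugated to its inverse by any element, for instance by $R=R_{\mathrm{id}}$.

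If $g=R_aR_b$, then since $R_a$ and $R_b$ are involutions we have $g^{-1}=R_bR_a$. The key computation is
$$R_a\, g\, R_a^{-1}=R_a(R_aR_b)R_a=R_bR_a=g^{-1},$$
using $R_a^{-1}=R_a$. Thus $g$ is conjugated to $g^{-1}$ by $R_h$ with $h=a\in G$, which is the assertion. (Equally, $R_b$ works.)

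The only point requiring care is the first step: making sure the identification of $G$ with $G(\mathcal{K}_{G,R})$ really transports the generators $\mathcal{L}_{R_a}\mathcal{L}_{R_b}$ back to the elements $R_aR_b$ of $G$. This relies on the injectivity of $\Psi$ from Proposition~\ref{tra} together with the formula from Lemma~\ref{gr}, so I expect no real obstacle. Note also that the conjugating element $R_h$ lies in $\mathfrak{S}(X)$, not necessarily in $G$, which is exactly what the statement allows.
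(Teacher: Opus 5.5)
Your proposal is correct and follows essentially the same route as the paper: write $g=R_aR_b$ and use that the factors are involutions. The paper conjugates by the second factor ($g^{-1}=R_h g R_h$), while you conjugate by the first, and you also spell out the identity case and the transport through $\Psi$, both of which the paper leaves implicit.
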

\textbf{Proof:} An element $g \in G$ lies in the unit ball if and only if there exist elements $h,k \in G$ such that
$$g=R_k R_h.$$
Because $R_k$ and $R_h$ are involutions it follows that
$$g^{-1}=R_h R_k=R_h R_k R_h R_h=R_h g R_h.$$
This proves the proposition. \hfill $\square$ 
\\ \\
Elements of a group which are conjugated to its inverse are referred to as \emph{reversible} elements. If the conjugation can
be chosen to be an involution then they are called \emph{strongly reversible}. Alternatively strongly reversible elements
are also referred to as strongly real, strictly real or bireflectional, see \cite{ofarrell-short}. In particular, we have the
following immediate Corollary:
\begin{cor}
If the kei metric is trivial, then all elements of $G$ are strongly reversible.
\end{cor}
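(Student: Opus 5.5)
The corollary follows directly from Proposition~\ref{nontri} together with the characterization of trivial word metrics recorded in Section~\ref{Sec: The word metric}. We work throughout under the standing assumptions of Proposition~\ref{tra}: $G$ is simple, $RGR=G$, and $R$ does not commute with every element of $G$. Under these assumptions $G$ is identified with the transvection group $G(\mathcal{K}_{G,R})$, so the kei metric $d_R$ is defined on $G$.

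First, I use the equivalence of properties (i) and (ii) from Section~\ref{Sec: The word metric}. The kei metric $d_R$ is trivial exactly when its unit ball is the whole group, i.e.\ $B_R=G$. Hence triviality of the kei metric means that every $g\in G$ lies in $B_R$.

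Second, I apply Proposition~\ref{nontri} to an arbitrary $g\in G=B_R$. More concretely, $g\in B_R$ means $g$ is either the identity or a single generator. Under the identification $\Psi$, a generator has the form $R_kR_h$ with $h,k\in G$; the identity can also be written as $R_hR_h$. The proposition then gives
$$g^{-1}=R_h\,g\,R_h .$$

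Third, I check that the conjugating element $R_h=hRh^{-1}$ is an involution, since it is conjugate to the involution $R$. Therefore $g$ is conjugate to its inverse by an involution, which is exactly the definition of $g$ being strongly reversible. I also note that $R_h$ need not lie in $G$; it lies in $\mathfrak{S}(X)$. So strong reversibility is to be understood with conjugation allowed in $\mathfrak{S}(X)$, or equivalently in the group generated by $G$ and $R$, and I will state this explicitly.

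No step is a real obstacle. The only points requiring care are the ambient group in which strong reversibility is meant (addressed in the third step) and the fact that the identity is trivially strongly reversible.
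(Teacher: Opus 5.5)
Your proposal is correct and is the paper's argument: triviality of $d_R$ means $B_R=G$, and Proposition~\ref{nontri} then gives $g^{-1}=R_h\,g\,R_h$ for every $g\in G$, with $R_h=hRh^{-1}$ an involution. Your remark that $R_h$ need not lie in $G$, only in $\langle G,R\rangle\subset\mathfrak{S}(X)$, makes explicit a point the paper leaves implicit. Calling these two ambient groups ``equivalent'' is loose, since strong reversibility in a larger group is in general a weaker property; here it holds in both because $R_h\in\langle G,R\rangle$.
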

{It is also interesting to examine how the kei metric behaves with respect to powers. Since
it is a norm, we have $|g^2| \leq 2|g|$ for any element $g \in G$. However, we can improve this estimate.
\begin{lemma}\label{squ}
Suppose that $g \in G$. Then we have the estimate
$$|g^2| \leq 2|g|-1.$$
\end{lemma}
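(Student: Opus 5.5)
Write $n=|g|$; the claim is only meaningful for $n\ge 1$, since for $g=\mathrm{id}$ both sides would read $0\le -1$, so we assume $g\neq\mathrm{id}$. The plan is to write $g$ as a product of $n$ generators, each a product $R_aR_b$ of two elements of the kei $\mathcal{K}_{R,G}$. Then $g^2$ is a word of length $4n$ in reflections, and we look for one cancellation of adjacent equal reflections after a suitable conjugation. Concretely, choose a minimal expression
$$g=R_{a_1}R_{b_1}R_{a_2}R_{b_2}\cdots R_{a_n}R_{b_n},$$
so that
$$g^2=R_{a_1}R_{b_1}\cdots R_{a_n}R_{b_n}R_{a_1}R_{b_1}\cdots R_{a_n}R_{b_n}.$$

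The key point is that the norm is conjugation invariant, because the generating set is conjugation invariant. Conjugating $g^2$ by the involution $R_{a_1}$ gives
$$R_{a_1}g^2R_{a_1}=R_{b_1}R_{a_2}R_{b_2}\cdots R_{a_n}R_{b_n}R_{a_1}R_{b_1}R_{a_2}\cdots R_{a_n}R_{b_n}R_{a_1}.$$
This does not shorten the word by itself, so a different trick is needed: free the middle by a reflection that moves through the word.

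The trick is to use the identity $R_xR_y=R_y(R_yR_xR_y)=R_yR_{x'}$, where $R_{x'}=R_y\star R_x$ lies in the kei by the first Lemma of Section~\ref{special}. With it a reflection can be pushed past others at the cost of replacing them by kei conjugates. Write $g=R_{a_1}w$, where $w=R_{b_1}R_{a_2}\cdots R_{b_n}$ is a product of $2n-1$ reflections. Then
$$g^2=R_{a_1}\,w\,R_{a_1}\,w=\bigl(R_{a_1}wR_{a_1}\bigr)\,w.$$
The element $R_{a_1}wR_{a_1}$ is a product of $2n-1$ kei elements, namely the $R_{a_1}\star$-images of the letters of $w$. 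Hence $g^2$ is a product of $(2n-1)+(2n-1)=4n-2$ reflections. An even number of reflections pairs up into $2n-1$ generators $R_xR_y$, which gives $|g^2|\le 2n-1$.

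The only thing to check is that a product of an even number $2m$ of kei elements has norm at most $m$, which is immediate by grouping consecutive pairs. The rest is the single conjugation identity above, so I expect no real obstacle beyond stating the $n\ge1$ caveat and verifying that $R_{a_1}R_{c}R_{a_1}\in\mathcal{K}_{R,G}$ for each letter $R_c$ of $w$.
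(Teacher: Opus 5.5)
Your argument is correct and is the paper's proof: writing $g=R_{a_1}w$ with $w$ a product of $2n-1$ kei elements, the paper also rewrites $g^2=(R_{a_1}wR_{a_1})\,w$ as the $R_{a_1}\star$-images of the letters of $w$ followed by $w$ itself, which gives $4n-2$ reflections and hence $|g^2|\le 2n-1$. Your caveat that the estimate requires $g\neq\mathrm{id}$ is also correct, and the paper's statement leaves it implicit.
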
 
\textbf{Proof:} We abbreviate $n=|g|$. Then there exists involutions $R_i, S_i$ for
$1 \leq i \leq n$ all conjugated to $R$ via elements in $G$ such that
$$g=R_1 S_1R_2S_2 \cdots R_nS_n.$$
We  write
\begin{eqnarray*}
g^2&=&R_1 S_1R_2S_2\cdots R_nS_nR_1S_1R_2S_2\cdots R_nS_n\\
&=&(R_1 S_1 R_1)(R_1R_2R_1)(R_1 S_2 R_1) \ldots (R_1 R_n R_1)(R_1 S_n R_1) S_1 R_2 S_2\ldots R_n S_n\\
&=&(R_1 \star S_1)(R_1 \star R_2)(R_1 \star S_2)\ldots (R_1 \star R_n) (R_1\star S_n)
S_1 R_2 S_2 \ldots R_nS_n.
\end{eqnarray*}
It follows that
$$|g^2| \leq 2n-1=2|g|-1.$$
This proves the Lemma. \hfill $\square$
\\ \\
An immediate Corollary of the Lemma is that the unit ball is invariant under taking squares.
In fact this is true for every power. 
\begin{lemma}\label{uniball}
Assume that $g \in B_R$. Then $g^n \in B_R$ for every $n \in \mathbb{Z}$.
\end{lemma}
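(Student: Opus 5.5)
If $g \in B_R$ is the identity there is nothing to prove, so we may write $g = R_k R_h$ with $h,k \in G$; this is the description of the unit ball already used in the proof of Proposition~\ref{nontri}. Set $A := R_k$ and $B := R_h$. Both are involutions in the kei $\mathcal{K}_{R,G}$. The plan is to express every power $g^n$ again as a product of exactly two elements of $\mathcal{K}_{R,G}$, using the kei operation $\star$ to produce the new involutions.

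First treat nonnegative powers. Since $A$ and $B$ are involutions, the power $(AB)^n$ is an alternating word of length $2n$ in $A$ and $B$, and it can be split around a palindromic middle. For $n = 2m$ we have
$$(AB)^{2m} = \big((AB)^m A (BA)^m\big)\, B.$$
The bracketed factor is $(AB)^m A (AB)^{-m}$, which is the conjugate of $A$ by $(AB)^m$. For odd $n = 2m+1$ we have
$$(AB)^{2m+1} = \big((AB)^m A (BA)^m\big)\,\big((AB)^m B (BA)^m\big)\, B,$$
which is not yet in the right shape. It is cleaner to use the identity
$$(AB)^n = \big((AB)^{j} A (AB)^{-j}\big)\,\big((AB)^{j'} B (AB)^{-j'}\big)$$
with a suitable choice of exponents. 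I will simply verify by induction that
$$(AB)^n = \big((AB)^{n-1}A\big)\,B.$$
Here $(AB)^{n-1}A$ is an alternating palindrome $ABAB\cdots A$ of odd length, and $B$ is already in the kei.

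The key observation is that any alternating palindrome of odd length in $A,B$ lies in $\mathcal{K}_{R,G}$. For example $ABA = A \star B$, and $ABABA = (AB)\,A\,(AB)^{-1} = A \star (B \star A)$, and so on. By induction, each odd palindrome has the form $P' = A \star (B \star P)$, where $P$ is the palindrome two letters shorter. By closure of $\mathcal{K}_{R,G}$ under $\star$ (the first lemma of Section~\ref{special}), $P'$ stays in the kei. Writing out the recursion is routine, so $g^n \in B_R$ for $n \geq 0$.

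For negative $n$, note that $g^{-1} = BA$ is again a product of two kei elements. Applying the previous step to $g^{-1}$ gives $g^{n} \in B_R$ for $n<0$. Equivalently, one can invoke the symmetry $|g^{-1}|=|g|$ of the word norm. The case $n=0$ is trivial.

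The only real point is identifying the odd palindromes as iterated $\star$-products, which is where the kei structure enters. I expect no genuine obstacle beyond bookkeeping the induction.
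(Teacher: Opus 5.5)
Your strategy is the paper's. You write $g^n=\big((AB)^{n-1}A\big)B$, show that the odd alternating palindrome $(AB)^{n-1}A$ lies in $\mathcal{K}_{R,G}$ as an iterated $\star$-product (it is the paper's $S\star_{n-1}T$ or $T\star_{n-1}S$), and handle negative powers via $g^{-1}=BA$.

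However, the one step you call the real point is miscounted. Since $A\star(B\star P)=ABPBA$, this operation makes $P$ four letters longer, not two; for example $A\star(B\star ABA)=ABABABA$. So the induction as stated is false: it is not true that each palindrome equals $A\star(B\star\,\cdot\,)$ applied to the one two letters shorter. Your own example $ABABA=A\star(B\star A)$ already jumps from length $1$ to length $5$. There are two ways to repair it:
\begin{itemize}
\item step by four letters from the two seeds $A$ and $ABA=A\star B$;
\item or, as in the paper, alternate single steps $B\star\big((AB)^mA\big)=(BA)^{m+1}B$ and $A\star\big((BA)^mB\big)=(AB)^{m+1}A$, starting from $A$ when $n$ is odd and from $B$ when $n$ is even. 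This alternation is exactly the recursion defining $\star_k$.
\end{itemize}

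Your opening identities are also wrong, although you then discard them. Since $(AB)^mA(BA)^m=(AB)^{2m}A$, you get $\big((AB)^mA(BA)^m\big)B=(AB)^{2m+1}$, not $(AB)^{2m}$. The three-factor odd formula fails as well: for $m=1$ its right side equals $A$.

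With the parity corrected, these identities actually give a shorter proof than the paper's. Using $BgB=g^{-1}$ and $AgA=g^{-1}$, one gets $(g^mBg^{-m})B=g^m(Bg^{-m}B)=g^{2m}$ and $(g^mAg^{-m})B=g^m(Ag^{-m}A)AB=g^{2m+1}$ for every $m\in\mathbb{Z}$. Both conjugates lie in $\mathcal{K}_{R,G}$, because $g\in G$ and $g^mR_kg^{-m}=R_{g^mk}$. This route needs no $\star$-induction and covers negative powers at once.
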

In order to prove this lemma we first introduce some general notation. If
$(\mathcal{K},\star)$ is a kei we define the iterated product 
$$\star_k \colon \mathcal{K} \times \mathcal{K} \to \mathcal{K}$$
for $k \in \mathbb{N}$ recursively by the requirement that $\star_1=\star$ and 
for $S,T \in \mathcal{K}$
$$S \star_{2k} T=T \star (S \star_{2k-1} T), \quad S \star_{2k+1} T=S \star(S \star_{2k} T)$$
so that we have for instance
$$S \star_2 T=T \star (S \star T), \quad S \star_3 T=S \star (T \star (S \star T)).$$
\textbf{Proof of Lemma~\ref{uniball}: } Suppose that $g \in B_R$. This means that there exists involutions 
$S$ and $T$ conjugated to $R$ by elements in $G$, such that
$$g=ST.$$
If $n=2m$ for $m \in \mathbb{N}$ we have that
$$g^n=(ST)^n=(S \star_{n-1} T)T$$
and if $n=2m+1$ for $m \in \mathbb{N}$ is an odd integer bigger or equal to three we have that
$$g^n=(ST)^n=(T \star_{n-1} S)T.$$
We see from this that
$$g^n \in B_R, \quad n \in \mathbb{N}.$$
Since $g^{-1}=TS$ lies as well in the unit ball $B_R$ we obtain by applying the above argument
to $g^{-1}$ that $g^n$ lies in the unit ball for negative powers as well. This proves the lemma.
\hfill $\square$
\begin{rem}
It should be possible to improve Lemma~\ref{squ} to the statement
$$|g^n| \leq |n|\cdot |g|-|n|+1, \quad n \in \mathbb{Z}$$
from which Lemma~\ref{uniball} is an immediate consequence. 
\end{rem}
}

\section{Alternating groups}
In this section, we illustrate the kei-metric with the alternating groups.

Consider the case when $X$ consists of n points and denote by $\mathfrak{A}_n$ the alternating group. Note that  $\mathfrak{A}_n$ is simple for any $n \ge 3, n \neq 4$.

Now for any transposition $T \in \mathfrak{S}_n$ we have 
$$T\, \mathfrak{A}_n \,T =\mathfrak{A}_n$$
since conjugation by $T$ does not change the parity of a permutation. 

The set $\mathcal{K}_{R,G}$, with $R=T, G=\mathfrak{A}_n$ is in this case the set of all transpositions in  $\mathfrak{S}_n$. Indeed, for any conjugation $g\, T \,g^{-1}$ with $g$ a permutation, we may change the parity of $g$ by replacing it with $g \, T$, therefore we may always assume that $g$ is an even permutation. The transvection group $G_R$ is thus the group generated by even number of transpositions in $\mathfrak{A}_n$, and thus agrees with $\mathfrak{A}_n$, for any $n$. We have thus defined a kei metric on $\mathfrak{A}_n$ from any transposition of $T \in \mathfrak{S}_n$.

We know that $\mathfrak{A}_2$ is trivial. $\mathfrak{A}_3$ is cyclic of order 3, and is composed of identity, $(231)=(13)(12)$ and $(312)=(231)^{-1}=(12)(13)$. For $\mathfrak{A}_4$, if a permutation of four digits is even, then the number of cycles of even length has to be even, so that we have either two cycles of length 2, or one cycles with length 3. In both cases, these elements can be written as product of two transpositions. Thus the kei-metric thus defined on them is trivial. 
L
\begin{thm} The kei metric on $\mathfrak{A}_n$ is non-trivial for any $n >5$. 
\end{thm}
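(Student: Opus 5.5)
To show the kei metric on $\mathfrak{A}_n$ is non-trivial, I will exhibit an element of $\mathfrak{A}_n$ outside the unit ball $B_R$. Here the kei $\mathcal{K}_{R,G}$ consists of all transpositions of $\mathfrak{S}_n$. So $B_R$ is the set of permutations that can be written as a product of at most two transpositions (the identity is $TT$). The plan is to find an even permutation that is not such a product, for every $n>5$ (indeed for every $n\geq 5$).

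The key invariant is the number of points moved. A product $T_1T_2$ of two transpositions moves at most four points, since each transposition moves exactly two. The possible cycle types of $T_1T_2$ are therefore only these:
\begin{itemize}
\item the identity, when $T_1=T_2$;
\item a $3$-cycle, when the transpositions share one point;
\item a product of two disjoint transpositions, when they are disjoint.
\end{itemize}
Hence every element of $B_R$ moves at most four points.

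Now choose the $5$-cycle $g=(1\,2\,3\,4\,5)$, which exists since $n\geq 5$. It is even, being a cycle of odd length, so $g\in\mathfrak{A}_n$. It moves five points, so $g\notin B_R$, and therefore $|g|_{\mathcal{K}}\geq 2$. By the equivalence of (i) and (ii) in Section~\ref{Sec: The word metric}, $B_R\neq G$ means $d_R$ is non-trivial. For $n>5$, Proposition~\ref{tra} applies: $\mathfrak{A}_n$ is simple and a transposition does not commute with all even permutations. So $\mathfrak{A}_n$ is identified with the transvection group $G(\mathcal{K}_{R,G})$, and the metric $d_R$ is indeed the kei word metric on $\mathfrak{A}_n$.

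I expect no real obstacle. The only point needing care is the identification of the kei with the set of all transpositions, which the paper has already established. As an alternative witness, one could use a product of two disjoint $3$-cycles, which moves six points.
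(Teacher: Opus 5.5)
Your argument is correct and is the paper's: a product of two transpositions moves at most four points, so an even permutation moving five points lies outside $B_R$. Your witness $(1\,2\,3\,4\,5)$ is actually preferable to the paper's. Read in the one-line notation the paper uses (cf.\ $(312)=(231)^{-1}$), the paper's $(231546\cdots n)$ equals $(1\,2\,3)(4\,5)$, which is odd and so not in $\mathfrak{A}_n$.
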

\textbf{Proof: } For $n=5$, the element $(23154)$ is not a product of two transpositions, since two transpositions at most move 4 digits. Likewise, the element $(231546\cdots n)$ in $\mathfrak{A}_n, n > 5$, which fixes all but the first five digits also cannot be a product of two transpositions. 
\hfill $\square$ 

We denote by $\mathfrak{A}_{\infty}=\cup_{i=3}^{\infty} A_i$. This is a simple group, and by our previous construction also carries a kei metric induced from any transposition $T \in \mathfrak{S}_{\infty}:= \cup_{i=3}^{\infty} \mathfrak{S}_i$.  For any $n$ even, the kei word norm of the element $(12)(34)\cdots((n-1)\, n)$ is clearly $n/2$. Thus $\mathfrak{A}_{\infty}$ contains elements of any kei word norm, and thus has infinite diameter with the kei word metric.

\section{Siegel upper half plane}
However, we point out, that the kei metric depends on the choice of the involution and might be trivial for one
involution while nontrivial for another. We see an example of this phenomenon in this section.

The Siegel upper half plane for $n \in \mathbb{N}$ is the space 
$$\mathcal{H}:=\mathcal{H}_n:=\{Z \in \mathbb{C}^{n\times n}: Z=Z^T, \mathrm{Im}(Z)>0\}$$
where by $\mathrm{Im}Z>0$ we mean that the imaginary part of the complex symmetric matrix $Z$ is positive definite.
For $n=1$ this is just the usual upper half plane. The linear symplectic group $\mathrm{Sp}(n)$  acts on the Siegel upper
half space as follows. Note that if we write a symplectic matrix $\Psi$ as
$$\Psi=\left(\begin{array}{cc}
A & B\\
C & D
\end{array}\right),$$
where $A,B,C,D$ are real $n \times n$ matrices, then the fact that $\Psi$ is symplectic is equivalent to the assertion that
$$\Psi^{-1}=\left(\begin{array}{cc}
D^T & -B^T\\
-C^T & A^T
\end{array}\right)$$
which implies that, by $\Psi^{-1} \Psi=\mathrm{id}$, we get
$$A^T C=C^T A, \quad B^T D=D^T B, \quad A^T D-C^T B=\mathrm{id}.$$
On the other hand, by calculating $\Psi \Psi^{-1}=\mathrm{id}$, we get
\begin{equation}\label{sympl}
A B^T=B A^T, \quad CD^T=D C^T, \quad A D^T-BC^T=\mathrm{id}.
\end{equation}
The action of the symplectic group on Siegel upper half space is given by
$$\Psi_* Z=(AZ+B)(CZ+D)^{-1}.$$
Note that if $n=1$ this is the usual action of $\mathrm{Sp}(1)=SL(2,\mathbb{R})$ on the upper half plane by
M\"obius transformations. The action is transitive and the stabilizer at $i \cdot \mathrm{id}$ is the unitary group
$U(n)$, therefore the Siegel upper half space can be identified with the homogeneous space
$\mathrm{Sp}(n)/U(n)$, see \cite[Chapter 1 \& 2]{mcduff-salamon} and \cite{siegel}.
Note that
$$\Psi_*=(-\Psi)_*$$
so that we get an action of the projective symplectic group
$$\mathrm{PSp}(n)=\mathrm{Sp}(n)/\{\pm \mathrm{id}\}$$
which is a simple group \cite[Chapter 2.3]{conway-curtis-norton-parker-wilson}. We further mention that
the Siegel upper half space can be endowed with a K\"ahler structure so that the symplectic group acts by
K\"ahler isomorphisms, i.e., isometric symplectomorphisms, which therefore preserve as well the complex structure \cite{siegel}.
We consider the anti-K\"ahler involution, i.e., the isometric antisymplectic involution
$$\mathbb{L} \colon \mathcal{H} \to \mathcal{H}, \quad Z \mapsto -\overline{Z}.$$  
In the special case of the hyperbolic plane, i.e., $n=1$, this corresponds to the reflection at a hyperbolic line, namely
the positive imaginary axis.  
Note that
$$\mathbb{L} ( \mathrm{PSp}(n)) \mathbb{L}=\mathrm{PSp}(n).$$
\begin{thm}
The kei metric $d_\mathrm{L}$ is trivial, i.e., $B_\mathbb{L}=\mathrm{PSp}(n)$.
\end{thm}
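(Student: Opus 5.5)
The plan is to show directly that every element of $\mathrm{PSp}(n)$ is a product $\mathbb{L}_g\mathbb{L}_h$ of two conjugates of $\mathbb{L}$ by elements $g,h\in\mathrm{PSp}(n)$. The first step is to linearize $\mathbb{L}$. Let
$$F=\left(\begin{array}{cc} \mathrm{id} & 0\\ 0 & -\mathrm{id}\end{array}\right),$$
which is a linear \emph{antisymplectic} involution of $\mathbb{R}^{2n}$. The same formula $(AZ+B)(CZ+D)^{-1}$ applied to $F$ gives $Z\mapsto -Z$. Let $\kappa(Z)=\overline{Z}$ denote complex conjugation. Then
$$\mathbb{L}=\kappa\circ F_*.$$
Since symplectic matrices are real, $\kappa$ commutes with every $\Psi_*$ and with $F_*$. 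Hence, for $\Psi,\Phi\in\mathrm{Sp}(n)$,
$$\Psi_*\mathbb{L}\Psi_*^{-1}\,\Phi_*\mathbb{L}\Phi_*^{-1}=(\Psi F\Psi^{-1}\Phi F\Phi^{-1})_*,$$
because the two factors of $\kappa$ cancel. It therefore suffices to prove the following linear statement: every $\Psi\in\mathrm{Sp}(n)$, up to sign, can be written as $I_1I_2$ with $I_1,I_2$ antisymplectic involutions that are $\mathrm{Sp}(n)$-conjugate to $F$. (Here I will check that the composition rule $(MN)_*=M_*N_*$ extends to antisymplectic matrices, which follows from the same block computation.)

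The second step is to show that every linear antisymplectic involution $I$ is symplectically conjugate to $F$. Since $I^*\omega=-\omega$ and $I^2=\mathrm{id}$, the eigenspaces $E_{\pm1}$ satisfy $\omega(v,w)=-\omega(v,w)$ for $v,w$ in the same eigenspace, so both eigenspaces are isotropic. Since $E_{+1}\oplus E_{-1}=\mathbb{R}^{2n}$, both are Lagrangian. Choose a basis $e_1,\dots,e_n$ of $E_{+1}$ and the $\omega$-dual basis $f_1,\dots,f_n$ of $E_{-1}$. The matrix sending the standard basis to $(e_i,f_i)$ is then symplectic and conjugates $F$ to $I$. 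Note also that $-F$ is conjugate to $F$, via $J$, so passing to $\mathrm{PSp}(n)$ causes no trouble.

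The third step, which I expect to be the main obstacle, is to show that every symplectic matrix $\Psi$ is reversed by some antisymplectic involution, that is, there is an antisymplectic involution $I$ with $I\Psi I=\Psi^{-1}$. Given this, $\Psi=(\Psi I)\,I$, and $\Psi I$ is again an antisymplectic involution because $(\Psi I)^2=\Psi I\Psi I=\Psi\Psi^{-1}=\mathrm{id}$. Both factors are then conjugate to $F$ by the second step, which finishes the proof.

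This reversibility statement is a classical theorem of Wonenburger: every element of $\mathrm{Sp}(2n,\mathbb{R})$ is a product of two antisymplectic involutions. I would first try to cite it. Failing that, I would prove it via the symplectic normal form. Decompose $\mathbb{R}^{2n}$ into $\Psi$-invariant symplectically orthogonal pieces, grouped according to the eigenvalue quadruples $\{\lambda,\bar\lambda,\lambda^{-1},\bar\lambda^{-1}\}$, and on each piece use the standard Jordan-type normal forms. The task is then to exhibit, block by block, an antisymplectic involution reversing the block. The direct sum of these is an antisymplectic involution reversing $\Psi$.

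The blocks to treat are the following:
\begin{itemize}
\item hyperbolic blocks $\mathrm{diag}(A,A^{-T})$, reversed by a swap-type map of the form $(x,y)\mapsto(Cy,C^{-T}x)$ suitably chosen;
\item elliptic rotation blocks, reversed by reflections;
\item nontrivial Jordan blocks with eigenvalue $\pm1$ or on the unit circle, which are the delicate cases.
\end{itemize}
For the last case, the key is that a Jordan block is similar to its inverse through a flip matrix, and one adjusts signs so that the flip is antisymplectic.
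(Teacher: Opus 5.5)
Your proposal is correct and takes the same route as the paper, which proves the theorem simply by invoking Wonenburger's theorem that every linear symplectic map is a product of two linear antisymplectic involutions. Your linearization $\mathbb{L}=\kappa\circ F_*$, together with the fact that every linear antisymplectic involution is $\mathrm{Sp}(n)$-conjugate to $F$, spells out the reduction the paper calls ``immediate''; the normal-form fallback in your last step is unnecessary once Wonenburger is cited.
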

\textbf{Proof:} This is an immediate consequence of a Theorem of Wonenburger \cite{wonenburger} which says that
every linear symplectic map can be written as the product of two linear antisymplectic involutions. \hfill $\square$
\\ \\
We consider next the K\"ahler involution, i.e., the isometric symplectic involution
$$\mathbb{P} \colon \mathcal{H} \to \mathcal{H}, \quad Z \mapsto -Z^{-1}.$$
Note that this corresponds to a point reflection at the point $i \cdot \mathrm{id}$. Observe further that
$$\mathbb{P} \in \mathrm{PSp}(n)$$
so that obviously we have
$$\mathbb{P} ( \mathrm{PSp}(n)) \mathbb{P}=\mathrm{PSp}(n).$$
If $[\Phi] \in \mathrm{PSp}(n)$ for $\Phi \in \mathrm{Sp}(n)${,} the absolute value of the
trace is independent of the choice of representative{,} so that we may set
$$|\mathrm{tr}|([\Phi]):=|\mathrm{tr}(\Phi)|.$$
With this notion we have the following lemma.
\begin{lemma}
Suppose that $[\Phi] \in B_\mathrm{P} \subset \mathrm{PSp}(n)$. Then 
$$|\mathrm{tr}|([\Phi]) \geq 2n.$$
\end{lemma}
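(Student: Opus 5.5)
The plan is to lift everything to $\mathrm{Sp}(n)$ and compute the trace of a product of two conjugates of $\mathbb{P}$ explicitly.

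First, I represent $\mathbb{P}$ by a symplectic matrix. The transformation $Z\mapsto -Z^{-1}$ is $\Psi_*$ for
$$J=\left(\begin{array}{cc} 0 & -\mathrm{id}\\ \mathrm{id} & 0\end{array}\right)\in \mathrm{Sp}(n),$$
since $A=0$, $B=-\mathrm{id}$, $C=\mathrm{id}$, $D=0$ gives $-Z^{-1}$. Moreover $J^2=-\mathrm{id}$, so $[J]$ is an involution in $\mathrm{PSp}(n)$. Since $G=\mathrm{PSp}(n)$, every element of $\mathcal{K}_{\mathbb{P},G}$ has the form $[gJg^{-1}]$ with $g\in\mathrm{Sp}(n)$. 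As in the proof of Proposition~\ref{nontri}, $[\Phi]\in B_\mathbb{P}$ means
$$[\Phi]=[gJg^{-1}hJh^{-1}] \quad \text{for some } g,h\in \mathrm{Sp}(n).$$
(The identity is included by taking $g=h$, and indeed $|\mathrm{tr}|(\mathrm{id})=2n$.)

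Next, I conjugate by $g^{-1}$, which does not change the trace. Putting $k=g^{-1}h\in\mathrm{Sp}(n)$, this gives
$$\mathrm{tr}(\Phi)=\pm\,\mathrm{tr}(JkJk^{-1}).$$
The key identity is that for symplectic $k$, the relation $k^TJk=J$ gives $JkJ^{-1}=(k^T)^{-1}$. Using $J=-J^{-1}$, we get
$$JkJk^{-1}=-JkJ^{-1}k^{-1}=-(k^T)^{-1}k^{-1}=-(kk^T)^{-1}.$$
Hence
$$|\mathrm{tr}|([\Phi])=\mathrm{tr}\big((kk^T)^{-1}\big).$$

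It remains to bound this trace from below. The matrix $S=kk^T$ is symmetric, positive definite and symplectic, so $S^{-1}$ is as well. A symplectic matrix $M$ satisfies $M^{-1}=J^{-1}M^TJ$, so $M^{-1}$ is similar to $M^T$ and has the same spectrum as $M$. Therefore the spectrum of $S^{-1}$ is invariant under $\lambda\mapsto 1/\lambda$. Its eigenvalues are positive reals, and counted with multiplicity they pair off as $\lambda_1,\lambda_1^{-1},\dots,\lambda_n,\lambda_n^{-1}$. This pairing also holds when $\lambda=1$, because the multiplicity of the eigenvalue $1$ of a symplectic matrix is even. Consequently
$$\mathrm{tr}(S^{-1})=\sum_{i=1}^n\Big(\lambda_i+\frac{1}{\lambda_i}\Big)\geq 2n$$
by AM--GM, which proves the lemma.

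The only point needing care is this eigenvalue pairing. An alternative that avoids multiplicity bookkeeping is to diagonalize $S$ symplectically: polar decomposition gives $S=U\,\mathrm{diag}(e^{a_1},\dots,e^{a_n},e^{-a_1},\dots,e^{-a_n})\,U^{-1}$ with $U\in \mathrm{Sp}(n)\cap O(2n)$. Then
$$\mathrm{tr}(S^{-1})=\sum_{i=1}^n 2\cosh a_i\geq 2n.$$
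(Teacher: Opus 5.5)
Your proof is correct, and it shares its first half with the paper. The paper also conjugates to $\Phi=\Psi J\Psi^{-1}J$ and, by explicit block multiplication, obtains $\Phi=-\Psi\Psi^T$. This is your $-(kk^T)^{-1}$ conjugated by $k$, with $\Psi=k^{-1}$; your identity $JkJ^{-1}=(k^T)^{-1}$ does that block computation in one line.

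The routes genuinely diverge at the final estimate. The paper writes $\mathrm{tr}(\Psi\Psi^T)=\|A\|^2+\|B\|^2+\|C\|^2+\|D\|^2$. It bounds this from below using only the block relation $AD^T-BC^T=\mathrm{id}$ and Cauchy--Schwarz, $n=\langle A,D\rangle-\langle B,C\rangle\le\tfrac12(\|A\|^2+\|B\|^2+\|C\|^2+\|D\|^2)$, which is completely elementary and needs no spectral theory. You instead use the reciprocal symmetry of symplectic spectra together with AM--GM. The multiplicity bookkeeping you worry about can be avoided entirely: the eigenvalue multiset $\{\mu_j\}$ of $S^{-1}$ is invariant under $\mu\mapsto\mu^{-1}$, so $\mathrm{tr}(S^{-1})=\tfrac12\sum_{j=1}^{2n}(\mu_j+\mu_j^{-1})\geq 2n$ directly.

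In exchange, your route yields more information. The exact value $\sum_i 2\cosh a_i$ shows at once that equality holds only for $[\Phi]=\mathrm{id}$. It also makes explicit that every element of $B_\mathbb{P}$ has a representative conjugate to a positive definite symmetric matrix. That excludes not only elliptic classes (the paper's corollary) but also, for instance, the parabolic class of $\left(\begin{array}{cc}1&1\\0&1\end{array}\right)$ when $n=1$, which has $|\mathrm{tr}|=2=2n$ and so is invisible to the trace bound alone.
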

\textbf{Proof:} Because the trace is conjugation invariant we can conjugate $\Phi$ with any element of $\mathrm{Sp}(n)$
to deduce the conclusion of the lemma. 
After proper conjugation, we can assume without loss of generality that there exists a symplectic matrix
$$\Psi=\left(\begin{array}{cc}
A & B\\
C & D
\end{array}\right)$$
such that $\Phi$ is of the form
\begin{eqnarray*}
\Phi&:=&\left(\begin{array}{cc}
A & B\\
C & D
\end{array}\right)
\left(\begin{array}{cc}
0 & -\mathrm{id}\\
\mathrm{id} & 0
\end{array}\right)
\left(\begin{array}{cc}
D^T & -B^T\\
-C^T & A^T
\end{array}\right)
\left(\begin{array}{cc}
0 & -\mathrm{id}\\
\mathrm{id} & 0
\end{array}\right)\\
&=&
\left(\begin{array}{cc}
B & -A\\
D & -C
\end{array}\right)
\left(\begin{array}{cc}
D^T & -B^T\\
-C^T & A^T
\end{array}\right)
\left(\begin{array}{cc}
0 & -\mathrm{id}\\
\mathrm{id} & 0
\end{array}\right)\\
&=&
\left(\begin{array}{cc}
AC^T+BD^T & -AA^T-BB^T\\
CC^T+DD^T & -CA^T-DB^T
\end{array}\right)
\left(\begin{array}{cc}
0 & -\mathrm{id}\\
\mathrm{id} & 0
\end{array}\right)\\
&=&
-\left(\begin{array}{cc}
AA^T+BB^T & AC^T+BD^T\\
CA^T+DB^T & CC^T+DD^T
\end{array}\right).
\end{eqnarray*}
Recalling that the standard inner product on matrices is given by
$$\langle A,B \rangle:=\mathrm{tr}(A B^T)$$
and abbreviating by
$$||A||=\sqrt{\mathrm{tr}(AA^T)}$$
its norm{,} we deduce 
\begin{equation}\label{p1}
-\mathrm{tr}(\Phi)=||A||^2+||B||^2+||C||^2+||D||^2.
\end{equation}
Using the last equality in (\ref{sympl}) and the Cauchy-Schwarz inequality we further have the inequality
\begin{equation}\label{p2}
n=\mathrm{tr}(\mathrm{id})=\langle A, D\rangle-\langle B,C \rangle
\leq \tfrac{1}{2}(||A||^2+||B||^2+||C||^2+||D||^2).
\end{equation}
Combining (\ref{p1}) and (\ref{p2}) we get
$$|\mathrm{tr}(\Phi)|\geq 2n.$$
This proves the lemma. \hfill $\square$
\\ \\
Recall that a symplectic linear map is called \emph{elliptic} if all its eigenvalues lie on the unit circle and
are different from $\pm 1$. Note that $\Phi$ is elliptic if and only if $-\Phi$ is elliptic so that the notion
of ellipticity makes sense in the projective symplectic group $\mathrm{PSp}(n)$. As an immediate Corollary we obtain
\begin{cor}
Assume that $[\Phi] \in \mathrm{PSp}(n)$ is elliptic. Then it does not lie in the unit ball
$B_\mathbb{P}$. In particular, the kei word metric thus constructed is non-trivial.
\end{cor}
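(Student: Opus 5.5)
The corollary follows by combining the preceding lemma with an upper bound on the trace of an elliptic matrix. Let $[\Phi]$ be elliptic with representative $\Phi \in \mathrm{Sp}(n)$. All $2n$ eigenvalues $\lambda_1,\dots,\lambda_{2n}$ of $\Phi$, counted with algebraic multiplicity, lie on the unit circle, and none equals $\pm 1$. Since $\mathrm{tr}(\Phi)=\sum_j \lambda_j$, the triangle inequality gives
$$|\mathrm{tr}(\Phi)| \leq \sum_{j=1}^{2n}|\lambda_j| = 2n.$$
Equality would force all $\lambda_j$ to be equal to a common unimodular number $\lambda$. But $\Phi$ is real, so its eigenvalues come in complex conjugate pairs, which gives $\lambda=\bar\lambda$ and hence $\lambda=\pm1$. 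Ellipticity excludes this, so the inequality is strict: $|\mathrm{tr}|([\Phi])<2n$. By the preceding lemma, every element of $B_\mathbb{P}$ satisfies $|\mathrm{tr}| \geq 2n$, so $[\Phi]\notin B_\mathbb{P}$.

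Some care is needed with the meaning of "lie on the unit circle". The strict inequality needs equality in the triangle inequality to fail, and this only requires the conjugate-pair argument above; multiplicities play no role. The statement is also independent of the sign of the representative, since $-\Phi$ is elliptic exactly when $\Phi$ is, and $|\mathrm{tr}|$ does not change under $\Phi\mapsto-\Phi$.

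For non-triviality, recall from Section~\ref{Sec: The word metric} that the kei metric is trivial exactly when $B_\mathbb{P}=\mathrm{PSp}(n)$, so it suffices to exhibit one elliptic element. Take the rotation $\Phi=\cos\theta\,\mathrm{id}+\sin\theta\, J$ with $J=\left(\begin{array}{cc}0&-\mathrm{id}\\ \mathrm{id}&0\end{array}\right)$ and $\theta\notin \pi\mathbb{Z}$. This matrix is symplectic, and its eigenvalues are $e^{\pm i\theta}$, each with multiplicity $n$, all different from $\pm1$. Hence $[\Phi]\notin B_\mathbb{P}$ and $d_\mathbb{P}$ is non-trivial.

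The only delicate point is the strictness of the trace bound, and the reality argument settles it; everything else is routine.
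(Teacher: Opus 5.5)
Your proposal is correct and takes the same route as the paper. The paper states the corollary as an immediate consequence of the lemma $|\mathrm{tr}|([\Phi]) \geq 2n$ on $B_\mathbb{P}$, and you supply the two details it leaves implicit: the strict bound $|\mathrm{tr}(\Phi)|<2n$ for elliptic $\Phi$ (from the equality case of the triangle inequality together with reality of $\Phi$), and the explicit elliptic element $\cos\theta\,\mathrm{id}+\sin\theta\,J$, which shows the metric is non-trivial.
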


\section{Involutions and diffeomorphisms}

Let $M$ be a closed, connected manifold. Abbreviate by
$\mathrm{Diff}_0(M)$ the group of smooth diffeomorphisms of $M$ isotopic to the identity. This group is simple 
\cite{thurston}. Let $R \colon M \to M$ be a smooth involution different from the identity. We do not require that
$R$ is isotopic to the identity. Note that
$$R \mathrm{Diff}_0(M) R=\mathrm{Diff}_0(M)$$
and $R$ does not commute with every element of $\mathrm{Diff}_0(M)$. Therefore we get the kei word metric
$$d_R \colon \mathrm{Diff}_0(M) \times \mathrm{Diff}_0(M) \to \mathbb{N}_0.$$
For diffeomorphisms there is an easy criterion to detect elements outside the unit ball $B_R$. Recall from
Proposition~\ref{nontri}  that an element in the unit ball is reversible. Hence to detect elements outside
the unit ball we just have to find nonreversible elements. This can be done by using the \emph{spectrum} of a diffeomorphism $\phi \colon M \to M$ which is a set of nonzero complex numbers
$$\mathrm{Spec}(\phi)\subset \mathbb{C}^*:=\mathbb{C} \setminus \{0\}$$
defined as follows. Abbreviate
$$\mathrm{Fix}(\phi):=\{x \in M: \phi(x)=x\} \subset M$$
the fixed point set of $M$. At a fixed point $x$ of $\phi$ its differential
$$d \phi(x) \colon T_x M \to T_x M$$
is  an invertible  linear map from a fixed vector space to itself in particular it has a spectrum
$$\mathrm{Spec}(d \phi(x)) \subset \mathbb{C}^*.$$
Now set
$$\mathrm{Spec}(\phi):=\bigcup_{x \in \mathrm{Fix}(\phi)} \mathrm{Spec}(d \phi(x)).$$
Because $T_x M$ is a real vector space it holds that
$\mathrm{Spec}(d \phi(x))$ is invariant under complex conjugation for each fixed point $x$ of $\phi$. In particular,
$\mathrm{Spec}(\phi)$ is invariant under complex conjugation. However, if the diffeomorphism $\phi$ is reversible its spectrum is invariant under a second involution.
\begin{lemma}\label{conju}
Suppose that $\phi \colon M \to M$ is a reversible diffeomorphism, then the spectrum $\mathrm{Spec}(\phi)$ is invariant under the involution $z \mapsto \tfrac{1}{z}$.
\end{lemma}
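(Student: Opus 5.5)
The idea is to use the conjugating diffeomorphism to match fixed points of $\phi$ with fixed points of $\phi^{-1}$, and then compare the spectra of the differentials at matched points. By reversibility there is a diffeomorphism $\psi \colon M \to M$ with
$$\psi \phi \psi^{-1}=\phi^{-1}.$$
Here $\psi$ is smooth because reversibility is understood inside the diffeomorphism group. In our setting of Proposition~\ref{nontri}, $\psi$ is even an involution $R_h$, but this is not needed.

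\textbf{Step 1: fixed points.} First I will check that $\psi$ maps $\mathrm{Fix}(\phi)$ bijectively onto $\mathrm{Fix}(\phi^{-1})=\mathrm{Fix}(\phi)$. Indeed, if $\phi(x)=x$ then
$$\phi^{-1}(\psi(x))=\psi\phi\psi^{-1}(\psi(x))=\psi(\phi(x))=\psi(x).$$
Hence $\psi(x)$ is fixed by $\phi^{-1}$ and therefore by $\phi$.

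\textbf{Step 2: differentials.} Next I will differentiate the identity $\psi\phi=\phi^{-1}\psi$ at a fixed point $x$, setting $y=\psi(x)$. The chain rule gives
$$d\psi(x)\, d\phi(x)=d\phi^{-1}(y)\, d\psi(x).$$
At the fixed point $y$ we have $d\phi^{-1}(y)=(d\phi(y))^{-1}$. Thus $d\phi(x)$ is linearly conjugate, via the isomorphism $d\psi(x)\colon T_xM\to T_yM$, to $(d\phi(y))^{-1}$. It follows that
$$\mathrm{Spec}(d\phi(x))=\{1/z : z\in \mathrm{Spec}(d\phi(y))\}.$$

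\textbf{Step 3: take the union.} Finally I will take the union over all fixed points $x$. Since $x\mapsto\psi(x)$ is a bijection of $\mathrm{Fix}(\phi)$, the point $y$ ranges over all of $\mathrm{Fix}(\phi)$. Therefore $\mathrm{Spec}(\phi)=\{1/z: z\in\mathrm{Spec}(\phi)\}$, which is the claimed invariance under $z\mapsto 1/z$.

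There is no real obstacle. The only point requiring care is that the differential of $\phi^{-1}$ is evaluated at $y=\psi(x)$ rather than at $x$; this is why the argument has to use the union over all fixed points instead of working at a single fixed point.
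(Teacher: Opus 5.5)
Your proof is correct and follows essentially the same route as the paper: the reversing diffeomorphism maps $\mathrm{Fix}(\phi)$ into itself, differentiating the conjugacy shows $d\phi(x)$ is conjugate to $(d\phi(\psi(x)))^{-1}$, and one takes the union over $\mathrm{Fix}(\phi)$. Strictly, Step~1 only proves $\psi(\mathrm{Fix}(\phi))\subseteq\mathrm{Fix}(\phi)$, not the bijectivity you assert, but this suffices (as in the paper), since it already gives $\mathrm{Spec}(\phi)\subseteq\{1/z: z\in\mathrm{Spec}(\phi)\}$ and applying $z\mapsto 1/z$ yields the reverse inclusion; alternatively, $\psi^{-1}$ also reverses $\phi$, which gives the surjectivity.
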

\textbf{Proof: }
Because $\phi$ is reversible there exists another diffeomorphism $\psi$ satisfying
\begin{equation}\label{conj}
\psi^{-1}\phi \psi=\phi^{-1}.
\end{equation}
Suppose that $x \in \mathrm{Fix}(\phi)$, i.e.,
$$x=\phi(x)$$ 
or equivalently
$$x=\phi^{-1}(x).$$
Using (\ref{conj}),  we can rewrite this equation as
$$x=\psi^{-1} \phi \psi(x)$$
or equivalently
$$\psi(x)=\phi \psi (x)$$
implying that
$$\psi(x) \in \mathrm{Fix}(\phi).$$
Differentiating (\ref{conj}) we obtain
$$d \psi^{-1}(\psi(x)) d\phi(\psi(x)) d\psi(x)=d \phi^{-1}(x)$$
which shows that $d\phi^{-1}(x)$ and $d \phi(\psi(x))$ are conjugated linear isomorphisms so that we have
$$\mathrm{Spec}(d \phi^{-1}(x))=\mathrm{Spec}(d\phi(\psi(x))).$$
If we abbreviate by
$$\rho \colon \mathbb{C}^* \to \mathbb{C}^*, \quad z \mapsto \tfrac{1}{z}$$
it holds that
$$\rho\big(\mathrm{Spec}(d\phi^{-1}(x))\big)=\mathrm{Spec}(d \phi(x))$$
so that we obtain
$$\rho\big(\mathrm{Spec}(d \phi(x))\big)=\mathrm{Spec}(d \phi(\psi(x))).$$
In particular,
$$\rho\big(\mathrm{Spec}(\phi)\big)=\mathrm{Spec}(\phi).$$
This proves the lemma. \hfill $\square$
\\ \\
In view of Proposition~\ref{nontri} the lemma has the following immediate corollary:
\begin{cor}\label{t01}
Suppose that $\phi \in \mathrm{Diff}_0(M)$ has the property that its spectrum
$\mathrm{Spec}(\phi)$ is \emph{not} invariant under the involution $z \mapsto \tfrac{1}{z}$.
Then $\phi$ lies outside of the unit ball $B_R$.
\end{cor}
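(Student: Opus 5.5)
The corollary follows by contraposition, combining Proposition~\ref{nontri} with Lemma~\ref{conju}. First we make sure Proposition~\ref{nontri} applies to $G=\mathrm{Diff}_0(M)$ and the involution $R$. Its hypotheses are those of Proposition~\ref{tra}, and the discussion preceding Lemma~\ref{conju} has already checked them: $\mathrm{Diff}_0(M)$ is simple by Thurston's theorem, $R\,\mathrm{Diff}_0(M)\,R=\mathrm{Diff}_0(M)$, and $R$ does not commute with every element of $\mathrm{Diff}_0(M)$. Hence the kei word metric $d_R$ and its unit ball $B_R$ are defined, and Proposition~\ref{nontri} holds in this setting.

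Now suppose, for contradiction, that $\phi\in B_R$. Proposition~\ref{nontri} gives some $h\in\mathrm{Diff}_0(M)$ with
$$\phi^{-1}=R_h\,\phi\,R_h, \qquad R_h=hRh^{-1}.$$
Since $R_h$ is an involution, $R_h^{-1}=R_h$, and so $\psi:=R_h$ satisfies $\psi^{-1}\phi\psi=\phi^{-1}$, which is exactly relation (\ref{conj}). The one point to check is that $\psi$ is a smooth diffeomorphism of $M$, so that the differentiation step in the proof of Lemma~\ref{conju} is valid. This holds because $R$ is a smooth involution and $h$ is a diffeomorphism, so their composite $hRh^{-1}$ is smooth. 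Note that $\psi$ need not lie in $\mathrm{Diff}_0(M)$, but Lemma~\ref{conju} does not require this: it only asks for a diffeomorphism conjugating $\phi$ to $\phi^{-1}$.

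Therefore $\phi$ is reversible in the sense of Lemma~\ref{conju}, and that lemma shows $\mathrm{Spec}(\phi)$ is invariant under $z\mapsto \tfrac{1}{z}$. This contradicts the hypothesis, so $\phi\notin B_R$.

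There is no substantial obstacle. The only care needed is the observation above, that the conjugating element $R_h$ is a genuine diffeomorphism of $M$ even though it may lie outside $\mathrm{Diff}_0(M)$.
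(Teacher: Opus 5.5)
Your proposal is correct and matches the paper's argument, which derives the corollary directly from Proposition~\ref{nontri} (an element of $B_R$ is conjugate to its inverse by some $R_h$) together with Lemma~\ref{conju}. You also check that $R_h=hRh^{-1}$ is a smooth diffeomorphism, possibly outside $\mathrm{Diff}_0(M)$; the paper leaves this implicit, and it is a correct addition.
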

We deduce from this Corollary further
\begin{cor}\label{t1}
The kei word metric $d_R \colon \mathrm{Diff}_0(M) \times \mathrm{Diff}_0(M) \to \mathbb{N}_0$ is nontrivial. 
\end{cor}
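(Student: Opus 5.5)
By Corollary~\ref{t01}, it suffices to exhibit a single diffeomorphism $\phi \in \mathrm{Diff}_0(M)$ whose spectrum $\mathrm{Spec}(\phi)$ is not invariant under $z \mapsto \tfrac{1}{z}$. Such a $\phi$ lies outside $B_R$, so $d_R(\phi,\mathrm{id}) \geq 2$, and the metric cannot be trivial.

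The plan is to take $\phi$ as the time-one map of a gradient-like flow, which is automatically isotopic to the identity through the flow itself. First choose a Morse function $f \colon M \to \mathbb{R}$. Since $M$ is closed, $f$ attains its minimum at some critical point $p$ of index $0$. Fix a Riemannian metric that is Euclidean in Morse coordinates near $p$, and take the flow $\phi_t$ of the vector field $\nabla f$.

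Next, identify the fixed points of $\phi := \phi_1$. I would choose $\phi := \phi_\epsilon$ for small $\epsilon>0$, or simply argue with $\phi_1$ directly. Fixed points of the time-one map of a gradient flow are exactly the zeros of $\nabla f$. The reason is that $f$ strictly increases along nonconstant orbits, so no nonconstant orbit can be periodic. Hence $\mathrm{Fix}(\phi) = \mathrm{Crit}(f)$.

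Now compute the spectrum. At a critical point $x$, the differential is $d\phi(x) = \exp(\mathrm{Hess} f(x))$, taken with respect to the metric. Its eigenvalues are therefore $e^{\lambda}$ with $\lambda$ running over the eigenvalues of the Hessian. So $\mathrm{Spec}(\phi)$ is a finite subset of the positive reals, and no eigenvalue equals $1$ because $f$ is nondegenerate.

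The remaining step, which I expect to be the main obstacle, is to break the symmetry under $z \mapsto \tfrac{1}{z}$. Critical points of complementary index could produce reciprocal eigenvalues; for instance, on $S^n$ with the height function, the minimum contributes $e^{1}$ and the maximum $e^{-1}$. To avoid this, I rescale $f$ locally. Since $\mathrm{Spec}(\phi)$ is finite, I modify $f$ in a small Morse chart around $p$ so that its Hessian there becomes $c\cdot\mathrm{id}$, where $c>0$ is chosen such that $e^{-c}$ is not among the finitely many eigenvalues coming from the other critical points. Equivalently, I can scale the metric near $p$ by a constant. The modification keeps $f$ Morse with the same critical points.

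With this choice, $e^{c} \in \mathrm{Spec}(\phi)$ but $e^{-c} \notin \mathrm{Spec}(\phi)$. The value $e^{-c}$ cannot arise from $p$ itself, since all eigenvalues at $p$ are $e^{c}>1$, and it does not arise from any other critical point by the choice of $c$. Corollary~\ref{t01} then shows $\phi \notin B_R$. In the degenerate case $\dim M = 0$, $M$ is a single point, and then no involution $R \neq \mathrm{id}$ exists, so the situation does not arise.
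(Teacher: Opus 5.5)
Your proof is correct. Like the paper, you reduce everything to Corollary~\ref{t01}; the difference is in how you produce the witness with non-symmetric spectrum.

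The paper takes an arbitrary $\phi\in\mathrm{Diff}_0(M)$ with a fixed point and asserts that an arbitrarily small perturbation destroys the $z\mapsto 1/z$ symmetry of $\mathrm{Spec}(\phi)$. That is shorter, and it implicitly gives a density statement: every diffeomorphism with a fixed point is $C^\infty$-close to one outside $B_R$. Making it rigorous, however, needs a genericity step the paper leaves implicit. One must control the fixed points a perturbation can create, e.g.\ by first making all fixed points nondegenerate, then changing the differential at one of them by a generic scalar factor via a perturbation supported near that point.

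Your gradient-flow construction avoids this entirely. Monotonicity of $f$ along orbits pins down $\mathrm{Fix}(\phi)=\mathrm{Crit}(f)$ exactly, and $d\phi(x)=\exp(\mathrm{Hess} f(x))$ computes the spectrum exactly. Rescaling the metric near the minimum $p$ is the cleanest way to realize your choice of $c$: it does not move critical points and leaves the Hessians at the other critical points untouched.

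As a small bonus, the same $c$ works for every time-$t$ map $\phi_t$ with $t>0$. Its fixed points are still $\mathrm{Crit}(f)$ and its spectrum is $\{e^{t\lambda}\}$, so your construction exhibits elements outside $B_R$ arbitrarily close to the identity.
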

\textbf{Proof of Corollary~\ref{t1}: }Given a diffeomorphism $\phi \in \mathrm{Diff}_0(M)$ with at least one fixed point
we can after 
an arbitrarily small perturbation assume that its spectrum is not invariant under the involution $z \mapsto \tfrac{1}{z}.$ Now the assertion follows from Corollary~\ref{t01}. \hfill $\square$ 

\begin{rem}By \cite{burago-ivanov-polterovich}, there exist interesting manifolds, such as spheres of all dimensions $\mathbb{S}^{n}, n \ge 1$ and all three-dimensional closed connected manifolds, such that any bi-invariant metric on the identity component of the group of diffeomorphisms has finite diameter. For such a manifold $M$, since the kei metric is in particular a bi-invariant metric on $\mathrm{Diff}_0(M)$, its diameter is finite for any orientation-preserving or orientation-reversing diffeomorphism on $M$.
\end{rem}

\section{Symplectic involutions and Hamiltonian diffeomorphisms}

Suppose that $(M,\omega)$ is a closed, connected  symplectic manifold. Given a smooth function $H \colon M \to \mathbb{R}$ we associate to it its Hamiltonian vector field implicitly by the equation
$$dH=\omega(\cdot, X_H).$$
If $H \colon M \times [0,1] \to \mathbb{R}$ is a smooth time dependent function we abbreviate for each $t \in [0,1]$
$$H_t:=H(\cdot, t) \in C^\infty(M)$$
and abbreviate by
$$\phi^t_H:=\phi^t_{X_H}$$
the flow of the time-dependent Hamiltonian vector field $X_{H_t}$. We set
$$\phi_H:=\phi^1_H$$
the time-one map of the flow. A straightforward application of Cartan's formula shows that
$$\phi_H^* \omega=\omega$$
so that 
$$\phi_H \in \mathrm{Diff}(M,\omega)$$ 
is a symplectomorphism, i.e., a diffeomorphism which preserves the symplectic structure. We abbreviate by
$$\mathrm{Ham}(M,\omega) < \mathrm{Diff}(M,\omega)$$
the subgroup of Hamiltonian diffeomorphisms, namely all symplectomorphisms which can be obtained as the time-one map of a time-dependent Hamiltonian vector field.  By a theorem of Banyaga \cite{banyaga} the group
of Hamiltonian diffeomorphisms is simple. 

A symplectic involution is a smooth involution $R \in \mathrm{Diff}(M)$ which satisfies
$$R^* \omega=\omega.$$ 
In particular, the Hamiltonian vector field $X_H$ is invariant under $R$, i.e., 
$$R^* X_H=X_H$$
from which it follows that
$$R \mathrm{Ham}(M,\omega) R=\mathrm{Ham}(M,\omega).$$
Furthermore, if $R$ is different from the identity, then not every Hamiltonian diffeomorphism commutes with $R$, so that the symplectic involution $R$ induces a kei word metric
$$d_R \colon \mathrm{Ham}(M,\omega) \times \mathrm{Ham}(M,\omega) \to \mathbb{N}_0.$$
Again we want to find a criterion which guarantees that a given Hamiltonian diffeomorphism lies outside
the unit ball of the kei metric. The criterion of Corollary~\ref{t01} is still valid, however it is useless.
Indeed, if $x$ is a fixed point of a symplectomorphisms
$\phi$, then its differential $d\phi(x) \colon T_x M \to T_x M$ is a linear symplectic map and in particular, its spectrum
is invariant under the involution $z \mapsto \tfrac{1}{z}$. However, instead of looking at the spectrum of a Hamiltonian symplectomorphism we can look at its action spectrum. For that we have to assume that our  symplectic manifold  $(M,\omega)$ is a closed  and \emph{symplectically aspherical} namely
$\omega$ vanishes on $\pi_2(M)$. The fact that we can associate an action spectrum for fixed points of Hamiltonian diffeomorphisms hinges on the fact that we can interpret fixed points in this set-up variationally as critical points of an action functional. However, the fact that the action spectrum is well defined is a highly nontrivial theorem which
is based on Floer homology \cite{floer} and is due to Schwarz, see \cite{schwarz}.

Here is a summary of the construction. If $H \in C^\infty(M \times S^1)$ is a Hamiltonian depending periodically on time,
a \emph{periodic orbit} $x \in C^\infty(S^1,M)$ is a solution of the ODE
$$\partial_t x(t)=X_{H_t}(x(t)).$$
Periodic orbits are in one to one correspondence with fixed points of $\phi_H$ via the evaluation map $x \mapsto x(0)$. 
Contractible periodic orbits can be detected variationally as follows. Let
$$\mathcal{L} \subset C^\infty(S^1,M)$$
be the component of contractible loops in the free loop space of $M$. The action functional of classical mechanics
$$\mathcal{A}_H \colon \mathcal{L} \to \mathbb{R}$$
associates to a contractible loop $x \in \mathcal{L}$ the value
$$\mathcal{A}_H(x)=\int_D \overline{x}^* \omega-\int_0^1 H(x(t),t)dt$$
where $\overline{x}$ is a filling disk for $x$, namely a smooth map from the closed unit disk 
$D=\{z \in \mathbb{C}:|z| \leq 1\}$ to $M$ satisfying
$$\overline{x}(e^{2 \pi i t})=x(t).$$
Note that by the assumption that $x$ is contractible such a filling disk exists. Observe further that by the assumption
that the symplectic manifold $(M,\omega)$ is symplectically aspherical, the value of the action functional 
$\mathcal{A}_H(x)$ is independent of the choice of the filling disk. Abbreviate by $\mathcal{P}_H$ the set of all
contractible periodic orbits of the Hamiltonian vector field of $H$. Then it holds that
$$\mathcal{P}_H=\mathrm{crit}(\mathcal{A}_H),$$
i.e., contractible periodic orbits correspond precisely to critical points of the action functional $\mathcal{A}_H$.
Define the action spectrum of $\mathcal{A}_H$ by
$$\mathrm{ASpec}(\mathcal{A}_H):=\bigcup_{x \in \mathrm{crit}(\mathcal{A}_H)} \mathcal{A}_H(x) \subset \mathbb{R}.$$
The action spectrum of $\mathcal{A}_H$ depends on the choice of the Hamiltonian. Indeed, one can always add constants to
the Hamiltonian. This does not change the Hamiltonian vector field, but this does change the action. In order to remove this ambiguity, Schwarz normalizes the Hamiltonian by requiring that for every $t \in S^1$ one has
$$\int_M H_t \omega^{n}=0.$$
The theorem of Schwarz, see \cite[Theorem 1.1]{schwarz} now tells us
\begin{thm}[Schwarz]
For a normalized Hamiltonian the action spectrum only depends on the time one map
$\phi_H$ of the Hamiltonion flow of $H$.
\end{thm}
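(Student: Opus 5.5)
The plan is to show that the action spectrum $\mathrm{ASpec}(\mathcal{A}_H)$ does not change as $H$ runs over normalized Hamiltonians with the same time-one map $\phi_H$. The natural strategy compares two such Hamiltonians $H$ and $K$ through the loop $\phi_H^t(\phi_K^t)^{-1}$.

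\emph{Step 1: reduce to a loop.} Given normalized $H,K$ with $\phi_H=\phi_K$, the path $g_t:=\phi_K^t(\phi_H^t)^{-1}$ is a loop in $\mathrm{Ham}(M,\omega)$ based at the identity. It is generated by a normalized Hamiltonian $F_t$, given by the usual composition formulas $K=F\#H$. The map $x(t)\mapsto g_t(x(t))$ then gives a bijection between $1$-periodic orbits of $H$ and those of $K$.

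\emph{Step 2: contractibility.} I will check that this bijection preserves contractible orbits. For this I would use Floer theory: Hamiltonian loops have contractible orbits, since the Arnold conjecture (Floer) forces the loop $t\mapsto g_t(x)$ to be contractible for one and hence every $x$.

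\emph{Step 3: compare actions.} Via the bijection, a direct computation with Cartan's formula gives
\[
\mathcal{A}_K(g\cdot x)=\mathcal{A}_H(x)+c(g),
\]
where the constant $c(g)$ depends only on the loop $g$. Here asphericity is what makes the filling-disk term well defined: glue the filling disk of $x$ to the cylinder swept by $g_t$.

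\emph{Step 4: $c(g)=0$.} This is the hard step, and I expect it to be the main obstacle. The constant $c(g)$ is the action of the loop, and it is a homomorphism $\pi_1(\mathrm{Ham})\to\mathbb{R}$. Its vanishing for normalized Hamiltonians on symplectically aspherical manifolds is precisely Schwarz's contribution. I would attempt it via spectral invariants. The spectral numbers $c(\alpha,H)$ built from Floer homology lie in $\mathrm{ASpec}$ and depend continuously on $H$. For the fundamental class one has $c([M],K)=c([M],H)+c(g)$ together with the Poincaré duality/normalization relations, and these force $c(g)=0$; the normalization condition $\int_M H_t\omega^n=0$ is what fixes the constant. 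Alternatively, one can invoke the general theorem, as the paper does: this is \cite[Theorem 1.1]{schwarz}.
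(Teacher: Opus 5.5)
\textbf{Assessment.} The paper gives no argument for this statement; it only quotes Schwarz's Theorem 1.1. Your Steps 1--3 are the standard and correct reduction. With $K=F\#H$ and the bijection $x\mapsto g\cdot x$, one gets $\mathrm{ASpec}(\mathcal{A}_K)=\mathrm{ASpec}(\mathcal{A}_H)+c(g)$, so the theorem follows once every Hamiltonian loop with normalized generator has $c(g)=0$.

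The converse is also immediate. Take $F$ and $0$: both are normalized and both have time-one map $\mathrm{id}$. The statement then says $\{c(g)\}=\mathrm{ASpec}(\mathcal{A}_F)=\mathrm{ASpec}(\mathcal{A}_0)=\{0\}$. So Step 4 is not a technical lemma. It is the entire content of the theorem, namely the vanishing of the action homomorphism $\pi_1(\mathrm{Ham}(M,\omega))\to\mathbb{R}$ on symplectically aspherical manifolds.

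\textbf{The gap in Step 4.} Your sketch does not establish this, because the relations you list hold for every value of $c(g)$.
\begin{itemize}
\item Since $\mathrm{ASpec}(\mathcal{A}_F)=\{c(g)\}$, spectrality forces every spectral invariant of the loop Hamiltonian $F$ to equal $c(g)$. The shift formula with $H=0$ therefore only restates $c([M],F)=c(g)$.
\item Poincar\'e duality between $F$ and $\bar F_t=-F_t\circ g_t$ gives no information. This $\bar F$ is normalized, generates $g^{-1}$, and has $c(g^{-1})=-c(g)$, so duality reduces to the tautology $c(g)=-(-c(g))$.
\item The Lipschitz (energy) estimate against the zero Hamiltonian yields only $-\int_0^1\max F_t\,dt\le c(g)\le -\int_0^1\min F_t\,dt$. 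Normalization guarantees that this interval contains $0$, but it does not shrink it to $\{0\}$.
\end{itemize}
The condition $\int_M F_t\,\omega^n=0$ only removes the additive ambiguity, making $c$ a well-defined homomorphism on $\pi_1(\mathrm{Ham})$. Proving that this homomorphism vanishes requires a genuinely non-formal input, and that input is exactly what Schwarz provides.

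\textbf{Conclusion.} You should drop the claim that these relations ``force'' $c(g)=0$. With that removed, your proposal, like the paper, rests on citing Schwarz. What Steps 1--3 add is a correct account of where the citation is needed: exactly at the level of Hamiltonian loops.
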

In view of the Theorem of Schwarz we can set for any $\phi \in \mathrm{Ham}(M,\omega)$
$$\mathrm{ASpec}(\phi)=\mathrm{ASpec}(\mathcal{A}_H)$$
where $H$ is any normalized Hamiltonian satisfying $\phi_H=\phi$.
We need the following lemma. 
\begin{lemma}\label{asp}
The action spectrum has the following properties.
\begin{description}
 \item[(i)] For every $\phi \in \mathrm{Ham}(M,\omega)$ it holds that .
 $$\mathrm{ASpec}(\phi)=-\mathrm{ASpec}(\phi^{-1}).$$
 \item[(ii)] If $\phi \in \mathrm{Ham}(M,\omega)$ and $\psi \in \mathrm{Symp}(M,\omega)$, then
 $$\mathrm{ASpec}(\phi)=\mathrm{ASpec}(\psi^{-1} \phi \psi).$$ 
 \item[(iii)] If $H$ is a $C^2$-small autonomous, i.e., time independent Hamiltonian, then
 $$\mathrm{ASpec}(\phi_H)=-\mathrm{ASpec}(H)$$
 where the action spectrum of $H$ is given by
 $$\mathrm{ASpec}(H)=\bigcup_{x \in \mathrm{crit}(H)} H(x).$$
\end{description}
\end{lemma}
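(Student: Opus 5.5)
The plan is to prove all three properties from the explicit variational description of $\mathrm{ASpec}$ together with Schwarz's theorem. In each case we exhibit a normalized Hamiltonian generating the relevant map and compare its critical points and critical values with those of $\mathcal{A}_H$.

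For (i), take a normalized $H$ with $\phi_H=\phi$ and set $\bar H(x,t):=-H(x,1-t)$. The flow is $\phi^t_{\bar H}=\phi_H^{1-t}\phi_H^{-1}$, so $\phi_{\bar H}=\phi^{-1}$, and $\bar H$ is still normalized. We may first reparametrize time so that $H_t$ vanishes near $t=0,1$; this does not change the time-one map and keeps smoothness in $S^1$. The correspondence between critical points is $x\mapsto \bar x(t):=x(1-t)$. Reversing the orientation of the loop reverses the filling disk orientation, so $\int_D\bar{\bar x}^*\omega=-\int_D\bar x^*\omega$. For the Hamiltonian term we have $-\int_0^1\bar H(\bar x(t),t)dt=\int_0^1H(x(1-t),1-t)dt=\int_0^1 H(x(s),s)ds$. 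Hence $\mathcal{A}_{\bar H}(\bar x)=-\mathcal{A}_H(x)$. Contractibility is preserved, and the correspondence is bijective, which gives (i).

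For (ii), take $\psi\in\mathrm{Symp}(M,\omega)$ and set $K_t:=H_t\circ\psi$. Then $X_{K_t}=\psi^*X_{H_t}$ because $\psi^*\omega=\omega$. So $\phi^t_K=\psi^{-1}\phi^t_H\psi$, and $K$ is normalized since $\psi$ preserves $\omega^n$. The correspondence $x\mapsto\psi^{-1}\circ x$ carries contractible orbits to contractible orbits. A filling disk $\overline x$ is carried to $\psi^{-1}\circ\overline x$, which preserves the symplectic area, and $K(\psi^{-1}x(t),t)=H(x(t),t)$. Hence the actions agree and the spectra coincide.

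For (iii), after subtracting a constant we may assume $H$ is normalized; the formula is stated for $\mathrm{ASpec}(H)$, so I read it as applying to normalized $H$. This normalization point needs care. Then $\phi_H$ is generated by the autonomous normalized $H$. The main obstacle is the standard fact that for $C^2$-small autonomous $H$, every contractible $1$-periodic orbit of $X_H$ is constant, i.e.\ a critical point of $H$. To prove it, fix a metric. A nonconstant periodic orbit of an autonomous flow with $\|\nabla X_H\|_{C^0}$ small must have period at least $2\pi/\|\nabla X_H\|$ (Yorke's estimate), which exceeds $1$ when $H$ is $C^2$-small. Granting this, each critical point $x=\mathrm{const}$ has the constant filling disk with zero area, so $\mathcal{A}_H(x)=-H(x)$. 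This gives $\mathrm{ASpec}(\phi_H)=-\mathrm{ASpec}(H)$.
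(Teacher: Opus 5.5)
Your proposal is correct and follows the paper's proof: reversing orbits to negate the action in (i), pulling back by $\psi$ in (ii), and reducing to constant orbits of a $C^2$-small autonomous Hamiltonian in (iii). Your caveat that (iii) needs $H$ to be normalized is justified. In (i), your choice $\bar H_t=-H_{1-t}$ is in fact the Hamiltonian that matches the correspondence $x(t)\mapsto x(-t)$, which the paper also uses. The paper's $H^-_t=-H_t\circ\phi^t_H$ generates the path $(\phi^t_H)^{-1}$, whose $1$-periodic orbits $t\mapsto(\phi^t_H)^{-1}(x(0))$ need not equal $x(-t)$ when $H$ depends on time.
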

\textbf{Proof:} If $\phi=\phi_H$ for a time dependent Hamiltonian $H$, set
$$H^-_t:=-H_t \circ \phi^t_H.$$
Then
$$\phi_H^{-1}=\phi_{H^-}.$$
Moreover, because $\phi^t_H$ is symplectic it follows that $H^-_t$ is normalized for
every $t \in S^1$. Note that $x \in \mathrm{crit}(\mathcal{A}_H)$ iff
$x^- \in \mathrm{crit}(\mathcal{A}_{H^-})$ where
$$x^-(t):=x(-t), \quad t \in S^1.$$
Because 
$$\mathcal{A}_H(x)=-\mathcal{A}_{H^-}(x^-)$$
assertion (i) follows. Assertion (ii) follows similarly, by observing that
the time one map of the flow of the Hamiltonian vector field of the Hamiltonian
$$H^\psi_t:=H_t \circ \psi$$
coincides with $\psi^{-1} \phi_H \psi$. Finally (iii) follows from the observation
that for an autonomous $C^2$-small Hamiltonian $H$ periodic orbits just correspond
to critical points of $H$. In particular, all periodic orbits are contractible and
of $x \in \mathrm{crit}(H)$ its action is given by
$$\mathcal{A}_H(x)=-H(x).$$
This proves (iii) and the lemma follows. \hfill $\square$
\\ \\
We are now in position to state the main result of this section.
\begin{thm}\label{main}
Suppose that $(M,\omega)$ is a closed symplectically aspherical manifold. If $\phi$ lies in 
the unit ball $B_R$ for a symplectic involution $R$, then its action spectrum satisfies
$$\mathrm{ASpec}(\phi)=-\mathrm{ASpec}(\phi).$$
\end{thm}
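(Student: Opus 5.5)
The plan is to combine Proposition~\ref{nontri} with properties (i) and (ii) of Lemma~\ref{asp}. Suppose $\phi \in B_R$. By definition of the unit ball there are $g,h \in \mathrm{Ham}(M,\omega)$ with $\phi = R_g R_h$, where $R_g = gRg^{-1}$ and $R_h = hRh^{-1}$. As in the proof of Proposition~\ref{nontri}, we then have
$$\phi^{-1} = R_h \phi R_h.$$
Since $R_h$ is an involution, $R_h = R_h^{-1}$, so $\phi^{-1} = \psi^{-1}\phi\psi$ with $\psi := R_h$.

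The key observation is that $\psi$ is a symplectomorphism. Indeed, $R$ is a symplectic involution, so $R^*\omega=\omega$, and $g,h$ are Hamiltonian, hence symplectic. Therefore $R_h = hRh^{-1}$ is a composition of symplectomorphisms and satisfies $R_h^*\omega = \omega$, so $\psi \in \mathrm{Symp}(M,\omega)$. Note that $\psi$ need not lie in $\mathrm{Ham}(M,\omega)$, since $R$ itself need not be Hamiltonian. This is exactly why Lemma~\ref{asp}(ii) is formulated for $\psi \in \mathrm{Symp}(M,\omega)$ rather than only for Hamiltonian conjugators.

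Applying Lemma~\ref{asp}(ii) with this $\psi$ gives
$$\mathrm{ASpec}(\phi)=\mathrm{ASpec}(\psi^{-1}\phi\psi)=\mathrm{ASpec}(\phi^{-1}).$$
Lemma~\ref{asp}(i) gives $\mathrm{ASpec}(\phi^{-1}) = -\mathrm{ASpec}(\phi)$. Combining the two yields $\mathrm{ASpec}(\phi) = -\mathrm{ASpec}(\phi)$, which is the claim.

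The only step needing care is the legitimacy of Lemma~\ref{asp}(ii) for a non-Hamiltonian $\psi$. One must check that $H_t\circ\psi$ is still a normalized Hamiltonian generating $\psi^{-1}\phi\psi$; this holds because $\psi$ preserves $\omega^n$. One must also check that the correspondence $x \mapsto \psi^{-1}\circ x$ between periodic orbits preserves contractibility and action. Contractibility is preserved because $\psi$ is a diffeomorphism. Action is preserved because a filling disk $\overline{x}$ is transported to $\psi^{-1}\circ\overline{x}$ with the same symplectic area, by $\psi^*\omega=\omega$, and the Hamiltonian term is unchanged. Since the paper states Lemma~\ref{asp} as already proved, I would simply invoke it, remarking on symplecticity of $R_h$ as the essential input.
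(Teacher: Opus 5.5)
Your proposal is correct and is the paper's proof: Proposition~\ref{nontri} makes $\phi$ conjugate to $\phi^{-1}$ via the involution $R_h$, and Lemma~\ref{asp}(ii) and (i) then give $-\mathrm{ASpec}(\phi)=\mathrm{ASpec}(\phi^{-1})=\mathrm{ASpec}(\phi)$. Your explicit check that $R_h=hRh^{-1}$ lies in $\mathrm{Symp}(M,\omega)$, but not necessarily in $\mathrm{Ham}(M,\omega)$, is left implicit in the paper, and it is exactly what justifies applying (ii).
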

\textbf{Proof of Theorem~\ref{main}: } If $\phi$ lies in the unit ball $B_R$ we deduce from Proposition~\ref{nontri} that $\phi$ is conjugated to its inverse via a symplectic involution $\psi$.
Hence from properties (i) and (ii) of Lemma~\ref{asp} we deduce that
$$-\mathrm{ASpec}(\phi)=\mathrm{ASpec}(\phi^{-1})=\mathrm{ASpec}(\psi^{-1} \phi \psi)
=\mathrm{ASpec}(\phi).$$
This proves the theorem. \hfill $\square$
\\ \\
As a Corollary of this theorem we deduce that the kei metric is nontrivial.
\begin{cor}\label{t2}
Suppose that $(M,\omega)$ is a closed symplectically aspherical manifold and $R$ is a symplectic involution. Then the kei word metric
$d_R \colon \mathrm{Ham}(M,\omega) \times \mathrm{Ham}(M,\omega) \to \mathbb{N}_0$ is nontrivial.
\end{cor}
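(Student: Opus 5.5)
By Theorem~\ref{main}, it suffices to exhibit a single $\phi \in \mathrm{Ham}(M,\omega)$ whose action spectrum is not symmetric under $a \mapsto -a$. Such a $\phi$ lies outside $B_R$, so $d_R(\phi,\mathrm{id}) \geq 2$, and the metric is nontrivial. Following the proof of Corollary~\ref{t1}, I will construct $\phi$ as the time-one map of a $C^2$-small autonomous Hamiltonian. Lemma~\ref{asp}(iii) then reduces the computation of $\mathrm{ASpec}(\phi_H)$ to the set of critical values of $H$.

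Concretely, I will choose a Morse function $F \colon M \to \mathbb{R}$ with exactly one critical point at its maximum. Since $M$ is closed, a Morse function exists, and by a standard local modification I can arrange a unique critical point attaining the maximum, with all other critical values strictly smaller. Then I normalize by setting $H := \epsilon\big(F - \tfrac{1}{\mathrm{vol}}\int_M F\,\omega^n\big)$ with $\epsilon>0$ small, so that $H$ is $C^2$-small and $\int_M H\,\omega^n = 0$. Let $\mathrm{crit}(H)=\mathrm{crit}(F)$ have critical values $c_{\max} > c_2 \geq \dots \geq c_{\min}$. Normalization forces $c_{\max} > 0 > c_{\min}$ because $H$ is nonconstant with zero mean. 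By Lemma~\ref{asp}(iii), $\mathrm{ASpec}(\phi_H) = -\{c_{\max}, c_2, \dots, c_{\min}\}$, a finite set.

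The only real issue is guaranteeing that this finite set is not symmetric. Symmetry would require in particular $c_{\min} = -c_{\max}$. This is a single closed condition, and I can break it by a local modification. I will replace $F$ by $F + \delta \beta$, where $\beta$ is a bump function supported in a small neighbourhood of the maximum point, equal to a constant $1$ near it, and vanishing near all other critical points. Choosing the neighbourhood so that $F$ is close to its maximum on $\mathrm{supp}\,\beta$ keeps the maximum point the unique critical point of $F+\delta\beta$ there. This shifts $c_{\max}$ by $\delta$ while the renormalization constant shifts all values by the same amount $\delta' = \delta \int\beta\,\omega^n/\mathrm{vol}$, which is strictly smaller than $\delta$ if $\beta$ has small support. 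Consequently $c_{\max}+c_{\min}$ changes by $\delta - 2\delta' \neq 0$, so for generic small $\delta$ we get $c_{\max} \neq -c_{\min}$. The largest element of $\mathrm{ASpec}(\phi_H)$ is $-c_{\min}$ and the smallest is $-c_{\max}$, so the set is not invariant under negation. Theorem~\ref{main} then gives $\phi_H \notin B_R$.

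The main obstacle is only bookkeeping. I must make sure the perturbation stays $C^2$-small (take $\epsilon$ small after fixing $\delta$) and creates no new critical points. I must also check that the $C^2$-smallness hypothesis in Lemma~\ref{asp}(iii) is satisfied, so that there are no nonconstant contractible $1$-periodic orbits; this is exactly what that lemma assumes. Alternatively, one could avoid the bump entirely by using the critical-value asymmetry of a generic Morse function, but the bump argument makes genericity explicit.
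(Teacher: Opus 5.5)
Your proposal is correct and follows the paper's proof. You pick a normalized $C^2$-small autonomous $H$ whose set of critical values is not invariant under negation, transfer this asymmetry to $\mathrm{ASpec}(\phi_H)$ via Lemma~\ref{asp}(iii), and conclude with Theorem~\ref{main}. The paper only asserts that such an $H$ exists; your bump construction makes it explicit. One small correction: what you need is $\delta'<\delta/2$, i.e.\ $\int\beta\,\omega^n<\tfrac12\mathrm{vol}$ (which small support gives), not merely $\delta'<\delta$.
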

\textbf{Proof of Corollary~\ref{t2}: } Choose a normalized $C^2$-small Hamiltonian $H$ which satisfies
\begin{equation}\label{mami}
\mathrm{ASpec}(H) \neq -\mathrm{ASpec}(H).
\end{equation}
By property (iii) of Lemma~\ref{asp} we infer that
\begin{eqnarray*}
\mathrm{ASpec}(\phi_H) \neq
-\mathrm{ASpec}(\phi_H).
\end{eqnarray*}
Now the assertion of the Corollary follows from Theorem~\ref{main}. 
\hfill $\square$

\section{Quasimorphisms}

Suppose that $G$ is a group. A \emph{quasimorphism}
$$\mu \colon G \to \mathbb{R}$$
is a map for which there exists a constant $C$ with the property that
for every $g,h \in G$ the inequality
$$\big|\mu(gh)-\mu(g)-\mu(h)\big| \leq C$$
holds true. In other words a quasimorphisms is up to bounded error a homomorphisms from
the group $G$ to the real numbers. A quasimorphism is called \emph{homogeneous} if for every
$g \in G$ and $k \in \mathbb{Z}$ it holds that
$$\mu(g^k)=k\mu(g).$$
Given any quasimorphism $\mu$ its \emph{homogenization} for $g \in G$ is defined as
$$\overline{\mu}(g)=\lim_{k \to \infty} \frac{1}{k}\mu(g^k).$$
The homogenization of a quasimorphism is then a homogeneous quasimorphisms
\cite[Prop.\,3.3.1]{bavard}. Note that if a 
quasimorphism is already homogeneous then it coincides with its homogenization. We say that
a quasimorphism $\mu$ is \emph{nontrivial} if its homogenization is not the zero map, in which
case by homogeneity it is necessarily unbounded. 

We assume now that we are in the situation of Section~\ref{special}, namely
$G < \mathfrak{S}(X)$ is a simple subgroup of the permutation group of a set $X$ and $R \in 
\mathrm{Inv}(X)$ is an involution satisfying $RGR=G$ and which does not commute with every
element of $G$. The involution $R$ gives rise to an
involutative automorphism
$$I_R \colon G \to G, \quad g \mapsto RgR.$$
The main theorem of this section is the following result.
\begin{thm}\label{thm: alg qm}
Suppose that there exists a nontrivial $I_R$-invariant quasimorphism $\mu \colon G \to \mathbb{R}$.
Then the kei metric $d_R$ on $G$ has infinite diameter. 
\end{thm}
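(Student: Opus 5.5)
We may replace $\mu$ by its homogenization $\overline{\mu}$. It is still $I_R$-invariant, because $I_R$ is an automorphism:
$$\overline{\mu}(RgR)=\lim \tfrac1k \mu((RgR)^k)=\lim \tfrac1k\mu(Rg^kR)=\lim\tfrac1k\mu(g^k)=\overline{\mu}(g).$$
By assumption $\overline{\mu}$ is nonzero. A homogeneous quasimorphism is conjugation invariant: from $\mu(hgh^{-1})^k=\mu(hg^kh^{-1})$ and $|\mu(hg^kh^{-1})-\mu(g^k)|\le 2C+|\mu(h)|+|\mu(h^{-1})|$, dividing by $k$ gives equality. Hence $\overline{\mu}(hgh^{-1})=\overline{\mu}(g)$ for all $g,h\in G$.

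The key step is to show that $\overline{\mu}$ is bounded on the generating set $\{R_gR_h\}$. Write
$$R_gR_h=gRg^{-1}hRh^{-1}=g\,\big(R(g^{-1}h)R\big)\,h^{-1}=g\, I_R(g^{-1}h)\, h^{-1}.$$
Set $k=g^{-1}h$. Conjugating by $g^{-1}$ turns this into $I_R(k)\,h^{-1}g=I_R(k)\,k^{-1}$. By conjugation invariance,
$$\overline{\mu}(R_gR_h)=\overline{\mu}(I_R(k)k^{-1}).$$
By the quasimorphism inequality (with defect $C'$ of $\overline{\mu}$), $I_R$-invariance, and $\overline{\mu}(k^{-1})=-\overline{\mu}(k)$,
$$|\overline{\mu}(I_R(k)k^{-1})|\le |\overline{\mu}(k)-\overline{\mu}(k)|+C'=C'.$$
So $|\overline{\mu}(s)|\le C'$ for every generator $s$.

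Now suppose $|g|=n$, so that $g=s_1\cdots s_n$ with each $s_i$ a generator. Iterating the quasimorphism inequality gives
$$|\overline{\mu}(g)|\le nC'+(n-1)C'\le 2nC'.$$
Choose $g_0$ with $\overline{\mu}(g_0)\neq0$. Then $|\overline{\mu}(g_0^m)|=m|\overline{\mu}(g_0)|$, and therefore
$$|g_0^m|\ge \frac{m|\overline{\mu}(g_0)|}{2C'}\to\infty.$$
This means $d_R$ has infinite diameter. If $C'=0$, then $\overline{\mu}$ is a homomorphism to $\mathbb{R}$ vanishing on the generators, hence zero, which is a contradiction. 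So $C'>0$ and the division is fine.

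The only delicate point is the identity rewriting a generator as a conjugate of $I_R(k)k^{-1}$, together with the use of conjugation invariance of the homogenization. Both are routine, so I expect no real obstacle.
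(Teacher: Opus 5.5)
Your proof is correct and follows the same approach as the paper: homogenize, use conjugation invariance and $I_R$-invariance to control $\overline{\mu}$ on the generators $R_gR_h$, and then bound $\overline{\mu}$ linearly in the word length. The differences are minor. The paper gets $\overline{\mu}(R_gR_h)=0$ exactly, by noting that the conjugate $h^{-1}R_gR_hh=k^{-1}I_R(k)$ is reversed by $R$ (your $I_R(k)k^{-1}$ is too), whereas you settle for the bound $C'$. The paper also phrases the conclusion as a contradiction with bounded diameter, rather than your explicit estimate $|g_0^m|\ge m|\overline{\mu}(g_0)|/(2C')$.
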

\textbf{Proof: } Since the homogenization of an $I_R$-invariant quasimorphism is itselt
$I_R$ invariant we can assume without loss of generality that $\mu$ is homogeneous. We argue
by contradiction and suppose that the diameter of $d_R$ on $G$ is bounded. This means
that there exists $N \in \mathbb{N}$ such that for every $g \in G$ there exist 
$g_1,\ldots,g_n,h_1,\ldots,h_n \in G$ for $n \leq N$ such that
$$g=R_{g_1}R_{h_1}R_{g_2}R_{h_2}\cdots R_{g_n}R_{h_n}.$$
For $1 \leq i \leq n$ we abbreviate
$$S_i=R_{g_i}R_{h_i}.$$
Since $R_{g_i}$ and $R_{h_i}$ are involutions we have
$$S_i^{-1}=R_{h_i}S_i R_{h_i}$$
or equivalently
$$(h_i^{-1}S_ih_i)^{-1}=R h_i^{-1} S_i h_i R.$$
Since $\mu$ is an $I_R$-invariant homogeneous quasimorphism we have
\begin{eqnarray*}
\mu\big(h_i^{-1} S_i h_i\big)=\mu\big(R h_i^{-1} S_i h_i R\big)=\mu\big((h_i^{-1}S_i h_i)^{-1}\big)
=-\mu\big(h_i^{-1} S_i h_i\big)
\end{eqnarray*}
and therefore
$$\mu\big(h_i^{-1} S_i h_i \big)=0, \quad 1 \leq i \leq n.$$
Since homogeneous quasimorphisms are invariant under conjugation (see for instance \cite{simon-salamon}) we have
$$\mu(S_i)=0, \quad 1 \leq i \leq n.$$
Since $g=S_1S_2 \ldots S_n$ we obtain from the quasimorphism property
$$\mu(g) \leq Cn \leq CN,$$
which shows that $\mu$ is uniformly bounded. In particular, since $\mu$ is homogeneous it
has to be the zero map which contradicts the assumption that it is nontrivial. This proves the theorem. \hfill $\square$

This theorem gives a useful criteria to show a kei metric to have infinite diameter. 

In the forthcoming work \cite{FPSZ} we introduce the grand kei metric. While the kei metric deals with involutions, therefore the reversibility here is \emph{strong}, \emph{i.e.} a map is conjugate to its inverse by an involution. The grand kei metric deals with a weaker version of reversibility, in which the conjugation map is not required to be an involution. Therefore if the diameter with respect to the grand kei metric is infinite, then it is so for kei metrics. Using quasi-homomorphisms we will show in \cite{FPSZ} that the grand kei metric on the group of Hamiltonian diffeomorphisms of any closed connected smooth surface has infinite diameter. This then shows that the kei metrics on the group of Hamiltonian diffeomorphisms for any symplectic or anti-symplectic involution have infinite diameter.
\appendix

\section{Appendix: The word metric associated to N-periodic maps}
We explain in this section that the construction of a word metric from a kei of involutions can be generalized to N-periodic maps.

Again let $X$ be a set and denote by $\mathrm{Per}^{N}(X)$ the set of all N-periodic maps on $X$, i.e., the set of all
maps $R \colon X \to X$, satisfying $R^N=\mathrm{id}|_X$. Conjugation defines a product on
$\mathrm{Per}^{N}(X)$
$$R \star S= R S R^{-1}.$$
This product has the following properties
\begin{description}
 \item[(i)] $R \star R=R.$
 \item[(ii)] $\underbrace{R \star (R \star \cdots  \star (R}_\text{N-times} \star S))=S.$
 \item[(iii)] $R \star (S \star T)=(R \star S) \star (R \star T).$
\end{description}
Let's mimic the terminology of Takasaki and call this an \emph{N-kei}. 

\begin{rem}These are examples of quandles (\cite{joyce}, \cite{carter}). Recall that a quandle is a set $Q$ together with a multiplication $\star$ such that
\begin{itemize}
\item $a \star a=a$;
\item $a \star (b \star c) =(a \star b) \star (a \star c), \forall a, b, c \in Q$;
\item For all $a, c \in Q$, there exits a $b \in Q$, such that $a \star b = c$. 
\end{itemize}
\end{rem}

Given an N-kei $(\mathcal{K},\star)$ we
get by property (ii) a map
$$\mathcal{L} \colon \mathcal{K} \to \mathrm{Per}^{N}(\mathcal{K}), \quad R \mapsto \mathcal{L}_R$$
where $\mathcal{L}_R$ is the left multiplication of $R$ on $\mathcal{K}$
$$\mathcal{L}_R \colon \mathcal{K} \to \mathcal{K}, \quad S \mapsto R \star S.$$
If $R, S, T \in \mathcal{K}$ are three elements of our N-kei, then again we have
\begin{eqnarray*}
\mathcal{L}_{R} \mathcal{L}_{S} \mathcal{L}_{R^{-1}}(T)&=&
R \star (S \star (R^{-1} \star T))\\
&=&(R \star S) \star (R \star (R^{-1} \star T))\\
&=&(R \star S) \star T\\
&=&\mathcal{L}_{R \star S}(T).
\end{eqnarray*}
 Thus the image of the map $\mathcal{L}$ is a sub N-kei of $\mathrm{Per}^{N}(\mathcal{K})$.

For a set $X$, We have denote by
$$\mathfrak{S}(X):=
\{\phi \colon X \to X, \phi\,\,\mathrm{bijective}\}$$
the group of permutations of $X$. 
For a N-kei $\mathcal{K}$, we define the \emph{transvection group}
$$G(\mathcal{K}) \subset \mathfrak{S}(\mathcal{K})$$
as
$$G(\mathcal{K})=\langle \mathcal{L}_R \mathcal{L}_S^{-1}=\mathcal{L}_R \mathcal{L}_{S^{-1}}=\mathcal{L}_R \mathcal{L}_S^{N-1}: R,S \in 
\mathcal{K} \rangle.$$

It is direct to verify that the generating set of the transvection group consisting of all products $\mathcal{L}_R \mathcal{L}_S^{-1}$
is symmetric and is conjugation invariant. Therefore it defines a bi-invariant metric
$$d_\mathcal{K} \colon G(\mathcal{K}) \times G(\mathcal{K}) \to \mathbb{N}_0$$
which we refer to as the \emph{N-kei word metric} on the transvection group.

Suppose that for a set $X$ 
$$G < \mathfrak{S}(X)$$
is a subgroup of the permutation group of $X$
and 
$$R \in \mathrm{Per}^{N}(X)$$
is an N-periodic map satisfying
$$RGR^{-1}=G,$$
i.e., $R$ is  not necessarily an element of $G$ but every element of
$G$ when conjugated with $R$ in $\mathfrak{S}(X)$ is again in $G$.
For $g \in G$ we again set
$$R_g:=g R g^{-1}.$$ 
and
$$\mathcal{K}_{R,G}:=\{R_g: g \in G\}.$$
It is again direct to verify
\begin{lemma}
$\mathcal{K}_{R,G}$ is a sub N-kei of $\mathrm{Per}^{N}(X)$. 
\end{lemma}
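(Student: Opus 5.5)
The plan is to imitate the proof of the involutive case (the first lemma of Section~\ref{special}), replacing $R$ by $R^{-1}$ wherever an inverse appears. There are two things to check. First, every $R_g$ lies in $\mathrm{Per}^N(X)$. Second, $\mathcal{K}_{R,G}$ is closed under the operation $S\star T=STS^{-1}$.

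For the first point, note that conjugation is a group automorphism of $\mathfrak{S}(X)$. Hence
$$(gRg^{-1})^N=gR^Ng^{-1}=\mathrm{id},$$
so $\mathcal{K}_{R,G}\subset \mathrm{Per}^N(X)$.

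For closure, I take $g,h\in G$ and compute
$$R_g\star R_h=R_gR_hR_g^{-1}=(gRg^{-1})(hRh^{-1})(gR^{-1}g^{-1}).$$
I then regroup this as a conjugate of $R$:
$$R_g\star R_h=(gRg^{-1}hR^{-1})\,R\,(RR^{-1})\ldots$$
The cleanest way to see this is to set $k:=R_g h=gRg^{-1}h$. Then
$$kRk^{-1}=gRg^{-1}\,h R h^{-1}\,gR^{-1}g^{-1}=R_g\star R_h .$$
So it remains only to show that $k\in G$.

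This is the one step needing the hypothesis, and it is the only mild obstacle. The hypothesis $RGR^{-1}=G$ also gives $R^{-1}GR=G$, by conjugating both sides with $R^{-1}$. Write
$$k=g\,(Rg^{-1}R^{-1})\,R h=g\,(Rg^{-1}R^{-1})\,(RhR^{-1})\,R.$$
This expression still carries a trailing $R$, so I choose a different representative of the same conjugate. Since $R$ commutes with itself, $kRk^{-1}=(kR^{-1})R(kR^{-1})^{-1}$. Replacing $k$ by
$$k'=kR^{-1}=g\,(Rg^{-1}R^{-1})\,(RhR^{-1})\in G,$$
we obtain $R_g\star R_h=R_{k'}$ with $k'\in G$.

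Finally, properties (i)--(iii) of an $N$-kei hold for any subset of $\mathrm{Per}^N(X)$ that is closed under $\star$, since they hold in $\mathrm{Per}^N(X)$ itself. Together with closure, this shows that $\mathcal{K}_{R,G}$ is a sub $N$-kei.
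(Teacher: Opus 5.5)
Your proof is correct and follows the paper's route: the appendix only says the lemma is ``direct to verify'', and its involutive-case proof writes $R_gR_hR_g$ as the conjugate of $R$ by $gRg^{-1}hR\in G$, whose $N$-periodic analogue is exactly your $k'=gRg^{-1}hR^{-1}=g\,(Rg^{-1}R^{-1})(RhR^{-1})\in G$. As a matter of exposition, you can drop the abandoned regrouping line ending in ``$\ldots$'' and the detour through $k=gRg^{-1}h$ (which need not lie in $G$), and define $k'$ directly.
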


We may again define the canonical homomorphism
$$\Psi \colon G \to \mathfrak{S}(\mathcal{K}_{G,R})$$
given for $g,h \in G$ by
$$\Psi(g)(R_h)=R_{gh},$$
and the subgroup
$$G_R < \mathfrak{S}(X)$$
which is generated by all transvections of elements in the N-kei
$\mathcal{K}_{R,G}$, namely
$$G_R=\langle R_g R_h^{-1}: g,h \in G\rangle.$$
We again have
\begin{lemma}\label{ngr}
The group $G_R$ is a normal subgroup of $G$ and the homomorphism $\Psi$ maps $G_R$ to the 
transvection group $G(\mathcal{K}_{G,R})$.
\end{lemma}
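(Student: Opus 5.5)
The plan is to follow the proof of Lemma~\ref{gr}, replacing the involution identities by their $N$-periodic analogues. First we check that each generator $R_g R_h^{-1}$ of $G_R$ lies in $G$. We write
$$R_g R_h^{-1} = g R g^{-1} h R^{-1} h^{-1} = g\,\bigl(R (g^{-1}h) R^{-1}\bigr)\,h^{-1}.$$
The hypothesis $RGR^{-1}=G$ gives $R(g^{-1}h)R^{-1}\in G$, so the whole product lies in $G$. Hence $G_R$ is a subgroup of $G$. Normality follows from conjugating a generator by $k\in G$:
$$k R_g R_h^{-1} k^{-1} = (kgRg^{-1}k^{-1})(khR^{-1}h^{-1}k^{-1}) = R_{kg}R_{kh}^{-1},$$
which is again a generator. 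So $kG_Rk^{-1}\subset G_R$ for all $k\in G$.

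For the second assertion we compare $\Psi$ with left multiplications in the $N$-kei, where now $\mathcal{L}_S(T)=STS^{-1}$. For $a\in G$ the definition of $\Psi$ gives
$$\Psi(a)(R_k)=R_{ak}=aR_ka^{-1},$$
so $\Psi(a)$ is conjugation by $a$ restricted to $\mathcal{K}_{R,G}$. Note that $\Psi$ is well defined: if $R_h=R_{h'}$, then $R_{gh}=gR_hg^{-1}=gR_{h'}g^{-1}=R_{gh'}$. Moreover $\Psi$ is a homomorphism on all of $G$.

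Now take $a=R_gR_h^{-1}\in G$. Then
$$\Psi(R_gR_h^{-1})(R_k)=R_gR_h^{-1}R_kR_hR_g^{-1}=\mathcal{L}_{R_g}\bigl(\mathcal{L}_{R_h}^{-1}(R_k)\bigr).$$
Here we used $\mathcal{L}_{R_h}^{-1}(T)=R_h^{-1}TR_h$. This inverse is $\mathcal{L}_{R_h}^{N-1}$ by property (ii), and it is also $\mathcal{L}_{R_h^{-1}}$, as noted in the definition of $G(\mathcal{K})$. So $\Psi$ sends each generator $R_gR_h^{-1}$ of $G_R$ to the generator $\mathcal{L}_{R_g}\mathcal{L}_{R_h}^{-1}$ of $G(\mathcal{K}_{G,R})$. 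Since $\Psi$ is a homomorphism, it maps $G_R$ into, and in fact onto, the transvection group.

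The only point needing care is whether $\mathcal{L}_{R_h}^{-1}$ should be read as $\mathcal{L}_{R_h^{-1}}$. The element $R_h^{-1}$ is not itself of the form $R_{g'}$, so it need not lie in $\mathcal{K}_{R,G}$. To avoid this, we write $\mathcal{L}_{R_h}^{-1}$ as $\mathcal{L}_{R_h}^{N-1}$, which is a legitimate element built from the $N$-kei $\mathcal{K}_{R,G}$. With that choice, everything else is a direct transcription of the computation in Lemma~\ref{gr}.
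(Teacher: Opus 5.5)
Your proposal is correct and is the direct transcription of the paper's proof of Lemma~\ref{gr}, which is what the paper relies on when it says ``We again have''. As there, the generators lie in $G$ because $RGR^{-1}=G$, normality follows from $kR_gR_h^{-1}k^{-1}=R_{kg}R_{kh}^{-1}$, and $\Psi$ sends each generator $R_gR_h^{-1}$ to the generator $\mathcal{L}_{R_g}\mathcal{L}_{R_h}^{-1}$. Your reading of $\mathcal{L}_{R_h}^{-1}$ as $\mathcal{L}_{R_h}^{N-1}$ is a sensible clarification of the paper's notation, though strictly $R_h^{-1}$ only need not lie in $\mathcal{K}_{R,G}$ rather than being excluded from it.
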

and
 
\begin{prop}\label{ntra}
Assume that the group $G$ is simple and $R$ is not commuting with every element of $G$. Then it holds that $G=G_R$ and $\Psi$ maps
$G_R$ isomorphically to the transvection group $\mathcal{K}_{G,R}$. In particular, 
$G$ is canonically isomorphic to $\mathcal{K}_{G,R}$. 
\end{prop}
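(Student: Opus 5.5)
The plan mirrors the proof of Proposition~\ref{tra}, with involutions replaced by $N$-periodic maps and the generators $R_gR_h$ replaced by $R_gR_h^{-1}$. The statement should be read as saying that $G$ is isomorphic to the transvection group $G(\mathcal{K}_{G,R})$. I take Lemma~\ref{ngr} as given: $G_R$ is a normal subgroup of $G$, and $\Psi$ maps $G_R$ into $G(\mathcal{K}_{G,R})$.

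To check the generator-to-generator correspondence underlying Lemma~\ref{ngr}, I would first record two identities. The first is $R_gR_h^{-1}=gRg^{-1}hR^{-1}h^{-1}$, which lies in $G$ because $RGR^{-1}=G$. The second is $kR_gR_h^{-1}k^{-1}=R_{kg}R_{kh}^{-1}$. I would also verify that $\Psi$ is well defined, namely that $R_h=R_{h'}$ implies $R_{gh}=R_{gh'}$; this holds because $R_{gh}=gR_hg^{-1}$. Then
$$\Psi(R_gR_h^{-1})(R_k)=R_{R_gR_h^{-1}k}=R_g\star\big(R_h^{-1}\star R_k\big)=\mathcal{L}_{R_g}\mathcal{L}_{R_h}^{-1}(R_k),$$
using that $R_h^{-1}$ acts on the N-kei as $\mathcal{L}_{R_h}^{-1}$. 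So $\Psi$ sends generators of $G_R$ onto generators of $G(\mathcal{K}_{G,R})$, and $\Psi(G_R)=G(\mathcal{K}_{G,R})$.

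Next I show $G_R=G$. Since $G$ is simple and $G_R$ is normal, $G_R$ is either trivial or all of $G$. Choose $g\in G$ that does not commute with $R$. Then $R_g\neq R$, so $R_gR^{-1}\neq \mathrm{id}$. This element equals $R_gR_{e}^{-1}$, so it lies in $G_R$, and therefore $G_R=G$.

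For injectivity, $\ker\Psi|_{G}$ is a normal subgroup of the simple group $G$, so it suffices to show that $\Psi$ is not the trivial map. For the same $g$ we have $\Psi(g)(R)=R_g\neq R$, so $\Psi(g)\neq\mathrm{id}$. Hence $\Psi\colon G=G_R\to G(\mathcal{K}_{G,R})$ is an isomorphism.

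The only step needing care is the identity $R_h^{-1}\star R_k=\mathcal{L}_{R_h}^{-1}(R_k)$ inside $\mathcal{K}_{G,R}$, that is, confirming that conjugation by $R_h^{-1}$ is the inverse of left multiplication by $R_h$. This follows from property (ii) of an N-kei, or directly from the fact that conjugation by $R_h^{-1}$ undoes conjugation by $R_h$. Everything else is a direct transcription of the involutive case.
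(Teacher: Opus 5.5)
Your proposal is correct and is essentially the paper's argument. The paper gives no separate proof for the $N$-periodic case; it intends exactly this transcription of the proof of Proposition~\ref{tra}: normality of $G_R$, nontriviality via $R_gR^{-1}\neq\mathrm{id}$, surjectivity of $\Psi$ onto $G(\mathcal{K}_{G,R})$ generator by generator, and injectivity from simplicity together with $\Psi(g)(R)=R_g\neq R$. You were right to read the target as $G(\mathcal{K}_{G,R})$ rather than $\mathcal{K}_{G,R}$, and your check that $\mathcal{L}_{R_h}^{-1}$ acts as conjugation by $R_h^{-1}$ is the one extra point the $N$-periodic setting needs.
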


Now assume that $G<\mathfrak{S}(X)$ is a simple group. If $R \in \mathrm{Per}^{N}(X)$ is an N-periodic map such that
$RGR^{-1}=G$ but $R$ does not commute with every element of $G$, by Proposition~\ref{ntra}, we can identify $G$ with the transvection group of the N-kei $\mathcal{K}_{G,R}$. In particular, $R$ endows $G$ with a bi-invariant  N-kei word metric 
$$d_R:=d_{\mathcal{K}_{R,G}}\colon G \times G \to \mathbb{N}_0.$$

In particular, this N-kei word metric can be defined for $\mathrm{Diff}_0(M)$ of a closed, connected manifold by a smooth N-periodic map different from the identity, or for $\mathrm{Ham}(M,\omega)$ of a closed, connected symplectic manifold by a smooth (anti-)symplectic N-periodic map different from the identity.
 
Nevertheless, in contrast to the case of involutions, now we do not have an analogue of Proposition \ref{nontri} for general N-periodic maps, therefore it is not clear now if the N-kei word metric $d_{R}$ is trivial or not. We leave this as an open question.

\medskip
\medskip
\medskip
\medskip
\medskip
\medskip
\medskip
\medskip
\medskip

\noindent
Urs Frauenfelder, Institute of Mathematics, University of Augsburg, Germany, Email: urs.frauenfelder@math.uni-augsburg.de \\
\\
Lei Zhao, School of Mathematical Sciences, Dalian University of Technology,China, Email: zhao1899@dlut.edu.cn

\end{document}